\newcommand{\excise}[1]{}
\newtheorem{theorem}{Theorem}[section]
\newtheorem{lemma}[theorem]{Lemma}
\newtheorem{cor}[theorem]{Corollary}
\newtheorem{prop}[theorem]{Proposition}
\theoremstyle{definition}
\newtheorem{example}[theorem]{Example}
\newtheorem{remark}[theorem]{Remark}
\newtheorem{definition}[theorem]{Definition}
\noindent\makebox[0mm][r]{\arabic{enumi}.}}
\noindent\makebox[0mm][r]{(\roman{enumi})}}
\DeclareMathAlphabet{\mathpzc}{OT1}{pzc}{m}{it}
\DeclareMathAlphabet{\mathcalligra}{T1}{calligra}{m}{n}
\def\<{\langle}
\def\>{\rangle}
\def\0{\mathbf{0}}
\def\CC{{\mathbb C}}
\def\cC{{\mathcal C}}
\def\cW{{\mathcal W}}
\def\cU{{\mathcal U}}
\def\DD{{\mathcal D}}
\def\NN{{\mathbb N}}
\def\cO{{\mathcal O}}
\def\RR{{\mathbb R}}
\def\ZZ{{\mathbb Z}}
\def\cN{{\mathcal N}}
\renewcommand\Re{{\mathrm {Re} }}
\def\pos{\operatorname{pos}}
\def\ini{{\operatorname{in}}}
\def\nsupp{\operatorname{nsupp}}
\def\vol{{\rm \operatorname{vol}}}
\def\rank{{\operatorname{rank}}}
\def\nothing{\varnothing}
\numberwithin{equation}{section}
\begin{document}

\mbox{}
\title[]{Gevrey and formal Nilsson solutions of $A$-hypergeometric systems}
\author{Mar\'ia-Cruz Fern\'andez-Fern\'andez}
\address{Departamento de \'Algebra\\
Universidad de Sevilla, Av. Reina Mercedes S/N 41012 Sevilla, Spain.}
\email{mcferfer@algebra.us.es}

\thanks{MCFF was partially supported by MTM2016-75024-P and FEDER}

\subjclass[2010]{13N10, 33C70, 14M25, 32C38.}
\keywords{$A$--hypergeometric system, $D$--module, Gevrey series, formal Nilsson series, initial ideals.}

\begin{abstract}
We prove that the space of Gevrey solutions of an $A$--hypergeometric system along a coordinate subspace is contained in a space of formal Nilsson solutions. Moreover, under some additional conditions, both spaces are equal. 
In the process we prove some other results about formal Nilsson solutions.
\end{abstract}
\maketitle

\mbox{}
\vspace{-15mm}
\parskip=0ex
\parindent2em
\parskip=1ex
\parindent0pt

\setcounter{section}{0}
\section*{Introduction}

We study formal solutions of $A$--hypergeometric systems, also known as GKZ--systems, as they were introduced by Gel'fand, Graev, Kapranov and Zelevinsky (see \cite{GGZ} and \cite{GKZ}). They are systems of linear partial differential equations associated with a pair $(A,\beta)$ where $A$ is a full rank $d\times n$ matrix $A=(a_{ij})=(a_1 \cdots a_n)$ with $a_j\in \ZZ^d$ for all $j=1,\ldots,n$ and $\beta\in \CC^d$ is a vector of complex parameters. Recall that the toric ideal of $A$ is defined as $$I_A:=\langle \partial^{u_+}-\partial^{u_{-}} |\; u\in \ZZ^n , \; Au=0 \rangle\subseteq \CC [\partial_1,\ldots ,\partial_n]$$ where  $(u_+)_j=\max\{u_j,0\}$ and $(u_-)_j=\max\{-u_j,0\}$ for $j=1,\ldots,n$.
The $A$--hypergeometric system $H_A (\beta )$ is the left ideal of the Weyl algebra $D=\CC [x_1 ,\ldots ,x_n ]\langle
\partial_1 ,\ldots ,\partial_n \rangle$ generated by $I_A$ and by the Euler operators $
E_i - \beta_i := \sum_{j=1} ^n a_{ij}x_j\partial_j -\beta_i$ for $i=1,\ldots, d$. 
The $A$--hypergeometric $D$-module is nothing but the quotient $M_A(\beta):=D/H_A(\beta)$.
It is well known that $M_A(\beta)$ is a holonomic $D$-module, see \cite{adolphson} and \cite{GKZ}. In \emph{loc. cit.}, it was also proved that the holonomic rank of $M_A(\beta)$, i.e. the dimension of the space of its holomorphic solutions at a nonsingular point, equals the \emph{normalized volume} of $A$, denoted by $\vol (A)$ (see \ref{nvol}), when $\beta$ is generic. Moreover, $M_A (\beta)$ is regular holonomic if and only if $I_A$ is homogeneous (equivalently, $(1,\ldots,1)$ lies in the rowspan of the matrix $A$), see \cite[Ch. II, 6.2, Thm.]{hotta}, \cite[Thm. 2.4.11]{SST} and \cite[Corollary 3.16]{slopes}.

Gevrey series solutions of a holonomic $D$--module $M$ along a variety $Y$ are closely related with the irregularity sheaf of $M$ along $Y$ defined by Mebkhout \cite{Mebkhout} and with the so called slopes of $M$ along $Y$, see \cite{LM}. The slopes of $M_A(\beta)$ along a coordinate subspace $Y$ were computed in \cite{slopes}.  The spaces of Gevrey series solutions of $M_A(\beta)$ along $Y$ were described in \cite{Fer10} (see also \cite{FC1}, \cite{FC2}) for generic enough parameters $\beta\in\CC^d$.

On the other hand, there is an algorithm that computes, for any regular holonomic left $D$--ideal $I$ and a generic vector $w\in \RR^n$, a set of \emph{canonical series solutions} of $I$ that  belong to certain \emph{Nilsson ring}. These series converge in a certain open set that depends on $w$ and form a basis of holomorphic solutions of $I,$ see \cite[chapters 2.5 and 2.6]{SST}. In \cite{DMM12} the authors introduced a notion of formal Nilsson solutions of $H_A(\beta)$ in the direction of $w$, denoted by $\cN_w (H_A(\beta))$, and they used it to generalize various results in \cite{SST} to the case when $H_A(\beta)$ is not necessarily regular. 

In the papers \cite{Fer10} and \cite{DMM12}, some of the results assume $\beta$ to be (very) generic, meaning that it lies outside a certain infinite (but locally finite) collection of affine hyperplanes. In particular, this condition is stronger than $\beta$ being not \emph{rank jumping}, a condition that only requires to avoid a concrete finite affine subspace arrangement of codimension at least two \cite{MMW}. The set of rank jumping parameters is $\varepsilon(A):=\{\beta\in\CC^d|\; \rank (M_A(\beta))>\vol(A)\}$ and it was computed in \cite{MMW} in terms of the local cohomology modules of the toric ring $S_A=\CC [\partial]/I_A$. In particular they proved that $\varepsilon(A)=\nothing$ if and only if $S_A$ is Cohen--Macaulay.

In this note we prove, for all $\beta\in \CC^d$, that the space of Gevrey series solutions of $M_A(\beta)$ along a coordinate subspace is contained in the space of formal Nilsson solutions of $H_A(\beta)$ in a certain direction, see Theorem \ref{thm: Gevrey-subset-Nilsson}. We also prove that under one additional condition both spaces coincide and that for $\beta\notin\varepsilon(A)$ the dimension of this space is the normalized volume of certain submatrix of $A$, see Theorem \ref{thm: Gevrey-equal-Nilsson}. Moreover, in Section \ref{section-Remarks}, we provide some additional results about formal Nilsson solutions of $H_A(\beta)$. 

\subsection*{Acknowledgments}
I am grateful to Christine Berkesch and Laura Felicia Matusevich for helpful conversations related to this work. I also thank two anonymous referees for useful comments and suggestions that improved the final version of this paper.

\section{Preliminaries.}\label{Preliminaries}

\subsection{Notations.}\label{subsection Notations} Let $A=(a_1 \cdots a_n)$ be a $d\times n$ matrix with columns $a_j\in\ZZ^d$ such that $\ZZ A:=\sum_{j=1}^n \ZZ a_j= \ZZ^d$. 

For a given subset $\tau\subseteq \{1,\ldots ,n\}$, set $\overline{\tau}:=\{1,\ldots ,n\}\setminus\tau$. We shall identify $\tau$ with the set of columns of $A$ indexed by $\tau$ and write $A_{\tau}$ for the submatrix of $A$ with column set $\tau$. We denote by $\Delta_\tau$ the convex hull in $\RR^d$ of all the columns of $A_\tau$ and the origin. We also denote $\pos (\tau):=\sum_{j\in \tau} \RR_{\geq 0} a_j$.

We say that a subset $\sigma\subseteq \{1,\ldots ,n\}$ is a \emph{maximal simplex} if $A_\sigma$ is an invertible matrix. We associate to a maximal simplex $\sigma$ an $n\times (n-d)$ matrix $B_\sigma$, where its columns   are indexed by $\overline{\sigma}$, the $j$--th column of $B_\sigma$ has $\sigma$--coordinates equal to $-A_{\sigma}^{-1}a_j$,  $j$--coordinate equal to one and the rest of coordinates equal to zero. In particular, the columns of $B_\sigma$ form a basis of the kernel of $A$.

For example, if $\sigma=\{1,\ldots,d\}$ then 
$$B_\sigma =\left(\begin{array}{cccc}
               -A_{\sigma}^{-1} a_{d+1} & -A_{\sigma}^{-1}a_{d+2} & \cdots & -A_{\sigma}^{-1}a_{n} \\
               1 & 0 &  & 0 \\
               0 & 1 &  & 0 \\
          \vdots &   & \ddots & \vdots \\
               0 & 0 &   & 1
             \end{array}\right).$$

Recall that for any subset $\tau \subseteq \{1,\ldots ,n\}$, the \emph{normalized volume} of $A_\tau$ (with respect to the lattice $\ZZ^d$) is given by:
\begin{equation}
\vol (A_\tau)
:=d! \vol_{\RR^d}(\Delta_{\tau})\label{nvol}\end{equation} where
$\vol_{\RR^d}(\Delta_{\tau})$ denotes the Euclidean volume
of $\Delta_{\tau}\subseteq \RR^d$. We also denote $\vol (\tau):=\vol (A_\tau)$. If $\sigma\subseteq \{1,\ldots ,n\}$ is a maximal  simplex, then $\vol (\sigma)=[\ZZ^d : \ZZ A_{\sigma}]=|\det (A_{\sigma})|$.

\subsection{Regular triangulations.}  
A vector $w\in \RR^n$ defines an abstract polyhedral complex
$T_{w}$ with vertex set contained in $\{1, \ldots , n\}$ as follows: $\tau \in T_{w}$ iff there
exists a vector $\mathbf{c}\in \RR^d$ such that
\begin{equation}
\langle \mathbf{c} , a_j \rangle =w_j  \mbox{ for all } j\in \tau, \label{equality-subdivision}
\end{equation}
\begin{equation}
\langle \mathbf{c} , a_j \rangle < w_j \mbox{ for all } j \notin \tau \label{inequality-subdivision}.
\end{equation}  
Such a polyhedral complex is called a \emph{regular subdivision} of $A$ if it satisfies $\pos (A)=\cup_{\tau \in T_{w}} \pos (\tau).$ This happens for example if $w\in \RR_{>0}^n$ or if $A$ is \emph{pointed}, i.e. the intersection of $\RR^n_{>0}$ with the rowspan of $A$ is nonempty.

An element $\tau\in T_w$ is called a \emph{facet} of $T_w$ if the rank of $A_\tau$ is $d$. Any regular subdivision is determined by its facets and from now on we will write only $\tau \in T_w$ when $\tau$ is a facet of $T_w$. We say that a regular subdivision $T_w$ of $A$ is a \emph{regular triangulation} of $A$ if all its facets are simplices.

An important case of regular subdivision of $A$ is the following. If $w_j=1$ for all $j=1,\ldots,n$, a facet of $T_w$ is the same as a facet of $\Delta_A$ not containing the origin. This particular regular subdivision of $A$ is denoted by $\Gamma_A$.

Notice that for a maximal simplex $\sigma$, it is straightforward from \eqref{equality-subdivision} and \eqref{inequality-subdivision} that 
\begin{equation}\label{w-Bsigma}
\sigma \in T_w \Longleftrightarrow w B_\sigma >0. 
\end{equation} 

If $T$ is any regular triangulation of $A$ then the set 
\begin{equation}\label{eqn:cone-secondary-fan}
C(T):=\{ w \in \RR^n | \; T=T_{w}\}=\{ w \in \RR^n |\; w B_\sigma >0, \; \forall \sigma \in T\} 
\end{equation} is an open nonempty convex rational polyhedral cone. 
The closures of these cones and their faces form the so called \emph{secondary fan} of $A$, introduced and studied by Gel'fand, Kapranov and Zelevinsky \cite[Chapter 7]{GKZ discrim}. 
When $A$ is pointed, it is easy to see that the secondary fan is a complete fan, i.e. its support is $\RR^n$. In general, it is not necessarily complete but its support contains the orthant $\RR_{\geq 0}^n$. 

\subsection{The $A$--hypergeometric fan.} A vector $w\in \RR^n$ defines a partial order on the monomials of the Weyl Algebra $D$ (and also on the monomials in $\CC[\partial_1,\ldots,\partial_n]$) by defining the $(-w,w)$--\emph{weight} of $x^\alpha \partial^\gamma\in D$ as the real value $\langle w , \gamma-\alpha\rangle$.

The \emph{initial form} of an element $P=\sum_{\alpha,\gamma \in\NN^n} c_{\alpha,\gamma} x^\alpha \partial^\gamma\in D$ with respect to $(-w,w)$, denoted by $\ini_{(-w,w)}(P)$, is the sum of the terms $c_{\alpha,\gamma} x^\alpha \partial^\gamma$, with $c_{\alpha,\gamma}\neq 0$, whose $(-w,w)$--weight is maximum. If $I$ is a left $D$--ideal, its \emph{initial ideal} with respect to $(-w,w)$ is defined as $$\ini_{(-w,w)}(I):=\langle \ini_{(-w,w)}(P)|\; P\in I, \; P\neq 0\rangle.$$

\begin{remark}\label{remark-pointed}
When $A$ is pointed, there is a vector $w'\in \RR_{>0}^n$ in the rowspan of $A$ and we have that $\ini_{w'}(I_A)=I_A$ and $\ini_{(-w',w')}(H_A(\beta))=H_A(\beta)$. Then, for any $w\in \RR^n$ we have that 
$w'':=w'+\epsilon w\in \RR_{>0}^n$, $\ini_w(I_A)=\ini_w(\ini_{w'}(I_A))=\ini_{w''}(I_A)$ and $\ini_{(-w,w)}(H_A(\beta))=\ini_{(-w,w)}(\ini_{(-w',w')}(H_A(\beta)))=\ini_{(-w'', w'')}(H_A(\beta))$ for $\epsilon>0$ small enough, see \cite[Lemma 2.1.6]{SST}. 
\end{remark}

The Gr\"obner fan of $I_A$, see \cite[p. 13]{Sturm} (resp. the small Gr\"obner fan of $H_A(\beta)$, see \cite[p. 60]{SST}) is a rational polyhedral fan in $\RR^n$ whose cones $\cC$ satisfy that $\ini_w (I_A)=\ini_{w'}(I_A)$ (resp. $\ini_{(-w,w)}(H_A(\beta))=\ini_{(-w',w')}(H_A(\beta))$) for all $w,w'\in \mathring{\cC}$, where $\mathring{\cC}$ denotes the relative interior of $\cC$. By Remark \ref{remark-pointed}, these two fans are also complete fans when $A$ is pointed.

\begin{definition}
The $A$--\emph{hypergeometric fan} (at $\beta$) is the coarsest rational polyhedral fan in $\RR^n$ that refines both the Gr\"obner fan of $I_A$ and the small Gr\"obner fan of $H_A(\beta)$.
\end{definition}

\begin{remark}\label{rem:hyper-refine-secondary}
We notice that this fan is a refinement of the hypergeometric fan defined in \cite[Section 3.3]{SST} when $I_A$ is homogeneous and $\beta$ is generic. By \cite[Proposition 8.15]{Sturm} and \cite[Corollary 4.4]{Ber-Fer} the $A$--hypergeometric fan is a refinement of the secondary fan of $A$.
\end{remark}

\subsection{$\Gamma$--series.}
Let us denote $\ker_\ZZ (A):=\{u \in \ZZ^n | \; A u=0\}$. Following \cite{GKZ}, for any vector $v\in \CC^n$ such that $A v=\beta$, we consider the $\Gamma$--series $$\varphi_v:=\sum_{u\in \ker_{\ZZ}(A)} \dfrac{x^{v+u}}{\Gamma (v+u+1)},$$ where $\Gamma (v+u+1)=\prod_{j=1}^n \Gamma (v_j+u_j+1)$ and $\Gamma$ is the Euler Gamma function. These series are formally annihilated by $H_A(\beta)$. Moreover, when $I_A$ is homogeneous and $\beta\in\CC^d$ is generic, a basis of convergent $\Gamma$--series solutions of $M_A(\beta)$ can be constructed by using any regular triangulation of $A$, see loc. cit. These $\Gamma$--series are handled in \cite[Section 3.4]{SST} in the following way:
\begin{equation}
\phi_v:=\sum_{u\in N_v} \dfrac{[v]_{u_{-}}}{[v+u]_{u_+}} x^{v+u} \label{eqn:SST-series}
\end{equation} where $N_v=\{u\in \ker_{\ZZ}(A)|\; \forall j=1,\ldots, n,\; v_j+u_j \in \ZZ_{<0} \mbox{ iff } v_j \in \ZZ_{<0} \}$ and $$[v]_{u}=\prod_{j=1}^n v_j (v_j-1)\cdots (v_j-u_j+1).$$

Set $\nsupp (v):=\{j\in \{1,\ldots ,n\}|\; v_j \in \ZZ_{<0}\}$ for any $v\in \CC^n$. 

The series $\phi_v$ is annihilated by $H_A(\beta)$ if and only if $v$ has \emph{minimal negative support}, i.e. there is no $u\in \ker_{\ZZ}(A)$ such that $\nsupp (v+u)\subsetneq \nsupp (v)$, see \cite[Proposition 3.4.13]{SST} whose proof works as well when $I_A$ is not homogeneous.

\begin{remark}\label{remark: phiv-constant-multiple}
It is easy to check that $\Gamma (v+1) \varphi_v = \phi_v$ when $v\in (\CC\setminus \ZZ_{<0})^n$. Notice that $\varphi_v=\varphi_{v+u}$ for any $u\in \ker_{\ZZ}(A)$. Thus, for $u\in N_v$ there is a nonzero scalar $c\in\CC$ such that $\phi_v=c\cdot\phi_{v+u}$.
\end{remark}

\subsection{Gevrey series solutions of $A$-hypergeometric $D$-modules.}\label{subsection-Gevrey}
In this section we introduce the notion of Gevrey series and we recall some notations and results from \cite{Fer10}.
Let us denote, for a subset $\tau\subseteq\{1,\ldots,n\}$, $Y_{\tau}:=\{x_j=0|\; j\in \overline{\tau}\}$ and $x_\tau$ for the set of variables $x_j$ with $j\in\tau$. We denote by $\cO_X$ the sheaf of holomorphic functions on $X=\CC^n$ and by $\cO_{\widehat{X|Y_\tau}}$ the sheaf of formal series along $Y_\tau$. A germ of $\cO_{\widehat{X|Y_\tau}}$ at $p\in Y_\tau$ can be written as
$$f=\sum_{\alpha\in \NN^{\overline{\tau}}}
f_{\alpha}(x_{\tau} ) x_{\overline{\tau}}^{\alpha} \in
\cO_{\widehat{X|Y_\tau},p}\subseteq \CC \{ x_{\tau} - p_{\tau}\}[[
x_{\overline{\tau}} ]]$$ where $f_{\alpha}(x_{\tau} ) \in
\cO_{Y_\tau}(U)$ for certain nonempty relatively open subset $U \subseteq
Y_\tau$, $p\in U$. A formal series $f=\sum_{\alpha\in \NN^{\overline{\tau}}} f_{\alpha}(x_{\tau} ) x_{\overline{\tau}}^{\alpha} \in \cO_{\widehat{X|Y_\tau},p}$ is said to be \emph{Gevrey} of order $s\in \RR$ along $Y_\tau$ at $p\in Y_\tau$ if the series $$\sum_{\alpha\in \NN^{\overline{\tau}}} \frac{f_{\alpha}(x_{\tau}
)}{(\prod_{j\in \overline{\tau}}\alpha_j !)^{s-1}} x_{\overline{\tau}}^{\alpha}$$ is convergent at
$p$.

Since $M_A(\beta)$ is a holonomic $D$--module, any of its formal solutions along $Y_\tau$ is Gevrey of some order. 
We denote by $Hom_D (M_A(\beta),\cO_{\widehat{X|Y_\tau},p})$ the space of all Gevrey solutions of $M_A(\beta)$ along $Y_\tau$ at $p\in Y_\tau$.

Given a maximal simplex $\sigma$ and a vector $\mathbf{k}=(k_i )_{i\notin \sigma }\in \NN^{\overline{\sigma}}$ we denote by $v_{\sigma}^{\mathbf{k}}\in \CC^n$ the vector 
with $\sigma$--coordinates equal to $A_\sigma^{-1}(\beta-A_{\overline{\sigma}}\mathbf{k})$ and $\overline{\sigma}$--coordinates equal to $\mathbf{k}$. Let $\Omega_{\sigma}\subseteq \NN^{\overline{\sigma}}$ be a set of representatives for the different classes with respect to the following equivalence relation in $\NN^{\overline{\sigma}}$: we say that $\mathbf{k}\sim \mathbf{k'}$ if and only if $A_{\overline{\sigma}}\mathbf{k}-A_{\overline{\sigma}} \mathbf{k'}\in \ZZ A_{\sigma}$. Thus, $\Omega_{\sigma}$ is a set of cardinality $\vol (\sigma )=[\ZZ^d :\ZZ A_\sigma]$. 

When $\beta$ is generic, the space of Gevrey solutions of $M_A(\beta)$ along $Y_\tau$ is explicitly described in \cite{Fer10}. 

\begin{theorem}{\rm \cite[Theorem 6.7 and Remark 6.8]{Fer10}} \label{Fer-10-theorem}
If $T(\tau)$ is a regular triangulation of $A_\tau$ that refines 
$\Gamma_{A_\tau}$ and $\beta \in \CC^d$ is generic enough, the set $\{\phi_{v_\sigma^{\mathbf{k}}}: \; \sigma \in T(\tau), \; \mathbf{k} \in \Omega_{\sigma}\}$ is a basis of the space of Gevrey series solutions of $M_A (\beta)$ along $Y_\tau$ at any point $p$ of a certain nonempty relatively open set $\cW_{T(\tau)}\subseteq Y_\tau$. In particular,
$\dim_{\CC} (Hom_D (M_A(\beta),\cO_{\widehat{X|Y_\tau},p}))=\vol (\tau)$.
\end{theorem}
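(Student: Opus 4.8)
The plan is to exhibit the $\vol(\tau)$ series $\phi_{v_\sigma^{\mathbf{k}}}$ as explicit, linearly independent Gevrey solutions of $M_A(\beta)$ along $Y_\tau$, and then to bound the dimension of the entire Gevrey solution space from above by $\vol(\tau)$, so that they are forced to be a basis.

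First I would confirm that each candidate is a genuine formal solution. For $\beta$ generic enough the vector $v_\sigma^{\mathbf{k}}$ has minimal negative support, so by \cite[Proposition 3.4.13]{SST} (whose proof does not require homogeneity of $I_A$) the series $\phi_{v_\sigma^{\mathbf{k}}}$ is annihilated by $H_A(\beta)$; genericity is used precisely to guarantee minimality of the negative support and that the $\sigma$-coordinates $A_\sigma^{-1}(\beta-A_{\overline\sigma}\mathbf{k})$ avoid $\ZZ_{<0}$, so that each series is nonzero and well defined. I would then verify the Gevrey property and identify the domain. Grouping the monomials of $\phi_{v_\sigma^{\mathbf{k}}}$ by the exponent $\alpha\in\NN^{\overline\tau}$ of $x_{\overline\tau}$, each coefficient $f_\alpha(x_\tau)$ is, up to a $\Gamma$-factor, a $\Gamma$-series in the variables $x_\tau$ attached to the simplex $\sigma$ of $A_\tau$ with parameter shifted by $A_{\overline\tau}\alpha$. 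The hypothesis that $\sigma$ belongs to a triangulation $T(\tau)$ refining $\Gamma_{A_\tau}$ forces $\sigma$ to lie on a facet of $\Delta_{A_\tau}$ away from the origin, which is exactly what makes each such $\Gamma$-series converge on a common nonempty open set $\cW_{T(\tau)}\subseteq Y_\tau$; estimating the Gamma-quotient coefficients $[v]_{u_-}/[v+u]_{u_+}$ in the $\overline\tau$-directions then produces the factorial bound defining the Gevrey order. This estimate is the technical core of the construction.

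Linear independence is immediate from the leading exponents: for a generic weight $w$ adapted to $Y_\tau$ whose restriction to the $\tau$-coordinates lies in the cone $C(T(\tau))$, the monomial $x^{v_\sigma^{\mathbf{k}}}$ is the unique $(-w,w)$-initial term of $\phi_{v_\sigma^{\mathbf{k}}}$, and distinct pairs $(\sigma,\mathbf{k})$ yield distinct exponents for generic $\beta$. Counting the candidates gives $\sum_{\sigma\in T(\tau)}|\Omega_\sigma|=\sum_{\sigma\in T(\tau)}\vol(\sigma)=\vol(\tau)$, the last equality being additivity of normalized volume over the triangulation $T(\tau)$ of $A_\tau$.

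The main obstacle is the reverse inequality $\dim_\CC Hom_D(M_A(\beta),\cO_{\widehat{X|Y_\tau},p})\le\vol(\tau)$. I would attack it through the map sending a formal solution $f=\sum_{\alpha\in\NN^{\overline\tau}}f_\alpha(x_\tau)x_{\overline\tau}^\alpha$ to its leading coefficient $f_0(x_\tau)$. A direct computation with the Euler operators and the toric operators supported on $\tau$ shows that $f_0$ is a holomorphic solution of the hypergeometric module $M_{A_\tau}(\beta)$, whose solution space at a generic point of $Y_\tau$ has dimension equal to the holonomic rank $\vol(A_\tau)=\vol(\tau)$ for generic $\beta$. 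The crux is to show that $f\mapsto f_0$ is injective, that is, that the leading coefficient determines the whole Gevrey solution; this amounts to solving the recursion imposed by the remaining toric operators $\partial^{u_+}-\partial^{u_-}$ with $u_{\overline\tau}\neq 0$ and checking that non-resonance of generic $\beta$ makes it uniquely solvable. Once injectivity holds, the dimension is pinned at exactly $\vol(\tau)$ and the $\phi_{v_\sigma^{\mathbf{k}}}$ form a basis. Alternatively, the same upper bound should follow from Mebkhout's irregularity complex together with the computation of the slopes of $M_A(\beta)$ along $Y_\tau$ carried out in \cite{slopes}.
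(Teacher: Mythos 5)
This statement is imported verbatim from \cite[Theorem 6.7 and Remark 6.8]{Fer10}; the present paper gives no proof of it, so your sketch can only be measured against the argument of \emph{loc.\ cit.} The constructive half of your plan is sound and matches that argument: for generic $\beta$ each $v_\sigma^{\mathbf{k}}$ has empty (hence minimal) negative support, so $\phi_{v_\sigma^{\mathbf{k}}}$ is annihilated by $H_A(\beta)$ by \cite[Proposition 3.4.13]{SST}; grouping by the exponent of $x_{\overline\tau}$ exhibits the coefficients as $\Gamma$--series of $A_\tau$ attached to $\sigma$, which converge on a common open subset of $Y_\tau$ precisely because $T(\tau)$ refines $\Gamma_{A_\tau}$ (this is \cite[Theorem 3.11]{Fer10}); and the $\sum_{\sigma\in T(\tau)}\vol(\sigma)=\vol(\tau)$ series have pairwise disjoint supports for generic $\beta$, hence are linearly independent.

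The genuine gap is in your upper bound, where both load-bearing claims are false. First, the holonomic rank of $M_{A_\tau}(\beta)$ is the volume of $\Delta_\tau$ normalized with respect to the lattice $\ZZ A_\tau$, not with respect to $\ZZ^d$; these differ by the index $[\ZZ^d:\ZZ A_\tau]$, which is exactly the quantity the sets $\Omega_\sigma$ are designed to capture. Second, the map $f\mapsto f_0$ is not injective. Take $A=\left(\begin{smallmatrix}2&0&1\\0&1&0\end{smallmatrix}\right)$ and $\tau=\{1,2\}$: then $\vol(\tau)=2$, but $M_{A_\tau}(\beta)=D_2/\langle 2x_1\partial_1-\beta_1,\,x_2\partial_2-\beta_2\rangle$ has rank $1$, and since $\ker_{\ZZ}(A)=\ZZ(1,0,-2)$ the Gevrey solutions along $Y_\tau=\{x_3=0\}$ split according to the parity of the exponent of $x_3$; the basis element $\phi_{v_\tau^{1}}$ is supported on odd powers of $x_3$ and has $f_0=0$. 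So your map has a one-dimensional target and a nontrivial kernel, and only because the two errors compensate does the count come out to $2$. That compensation is the whole content of the upper bound: one must decompose the support of a formal solution $f=\sum_\alpha f_\alpha(x_\tau)x_{\overline\tau}^\alpha$ (each $f_\alpha$ solving $M_{A_\tau}(\beta-A_{\overline\tau}\alpha)$, as in \cite[Lemma 6.11]{Fer10}) into the classes modulo $\ZZ A_\sigma$ indexed by $\Omega_\sigma$, and show via your recursions and the non-resonance of generic $\beta$ that $f$ is determined by one coefficient per class. That combinatorial bookkeeping is the missing idea; your closing appeal to the irregularity complex and the slopes of \cite{slopes} is a plausible alternative route but, as stated, is a pointer rather than an argument.
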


For more precise statements, including the Gevrey order of these series and its relation with the so called \emph{slopes} of $M_A(\beta)$ along $Y_\tau$, see \cite{Fer10}. The slopes of $M_A(\beta)$ along $Y_\tau$ were described in \cite{slopes}.

Notice that if $\tau=A$ then $Y_\tau=\CC^n$, $\cO_{\widehat{X|Y_\tau}}=\cO_X$, and Theorem \ref{Fer-10-theorem} gives a basis of holomorphic functions of $M_A(\beta)$ at any point of $\cW_{T(\tau)}$ when $\beta\in \CC^d$ is generic. Such a basis was first described in \cite{Ohara-Takayama} (see also \cite{GKZ} when $I_A$ is homogeneous).

The following result is the first part of \cite[Theorem 6.2]{Fer10}.

\begin{theorem}\label{thm: lower-bound-Gevrey}
If $p$ is a generic point of $Y_\tau$ then, for all $\beta \in \CC^d$, $$\dim_{\CC} (Hom_D (M_A (\beta),\cO_{\widehat{X|Y_\tau},p}))\geq \vol (\tau).$$
\end{theorem}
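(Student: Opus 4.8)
The plan is to deduce the bound for arbitrary $\beta$ from the exact count at generic $\beta$ in Theorem \ref{Fer-10-theorem} by an upper semicontinuity (confluence) argument: as $\beta$ specializes, the dimension of the Gevrey solution space can only jump up, never drop below its generic value $\vol(\tau)$. Concretely, I would fix once and for all a regular triangulation $T(\tau)$ of $A_\tau$ refining $\Gamma_{A_\tau}$. Since normalized volumes add over a triangulation, $\sum_{\sigma\in T(\tau)}\vol(\sigma)=\vol(\tau)$, and because $|\Omega_\sigma|=\vol(\sigma)$, the index set $\{(\sigma,\mathbf{k}):\sigma\in T(\tau),\ \mathbf{k}\in\Omega_\sigma\}$ has exactly $N:=\vol(\tau)$ elements. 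For generic $\beta$ the associated series $\phi_{v_\sigma^{\mathbf{k}}}$ form a basis of the Gevrey solution space at a generic $p\in Y_\tau$, so the generic dimension equals $N$; the goal is to promote this to the inequality $\dim\geq N$ for \emph{every} $\beta\in\CC^d$.

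First I would realize the Gevrey solution space as the kernel of a family of linear maps depending holomorphically on $\beta$. A Gevrey solution along $Y_\tau$ has the form $f=\sum_{\alpha\in\NN^{\overline\tau}}f_\alpha(x_\tau)x_{\overline\tau}^\alpha$, and by holonomicity of $M_A(\beta)$ its exponents are confined to finitely many translates of $\ker_\ZZ(A)$; the admissible leading exponents along $Y_\tau$ are governed by $\ini_{(-w,w)}(H_A(\beta))$ for $w$ in the open cone of $T(\tau)$, and they form a finite set. This lets one send a Gevrey solution to its finitely many leading coefficients and recover the whole series from them through the recursion imposed by $I_A$ and the Euler operators. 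The upshot I would establish is that the Gevrey solution space at generic $p$ is the kernel of a linear map $\Phi(\beta)$ between fixed finite-dimensional spaces, with $\Phi(\beta)$ polynomial in $\beta$. Lower semicontinuity of the rank of such a family yields $\dim_\CC\ker\Phi(\beta)\geq\min_\beta\dim_\CC\ker\Phi(\beta)$, and the right-hand side equals the generic value $N=\vol(\tau)$ by Theorem \ref{Fer-10-theorem}.

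Equivalently, and more explicitly, I would run a confluence argument: choose a generic line $\beta(t)=\beta+t\beta_0$ so that $\beta(t)$ is generic for $0<|t|<\delta$; for such $t$ the $N$ series $\phi_{v_\sigma^{\mathbf{k}}(t)}$ span an $N$-dimensional space $V(t)$ of Gevrey solutions of $H_A(\beta(t))$. Projecting to the finite-dimensional space of leading coefficients and using that the Gevrey order along $Y_\tau$ is a geometric invariant of the pair $(Y_\tau,T(\tau))$, hence independent of $t$, one obtains uniform Gevrey estimates that keep the planes $V(t)$ in a fixed bounded region, so a limit plane $V(0)=\lim_{t\to 0}V(t)$ exists in the relevant Grassmannian. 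Since the defining operators of $H_A(\beta(t))$ converge coefficientwise to those of $H_A(\beta)$, every element of $V(0)$ is annihilated by $H_A(\beta)$ and stays Gevrey of the same order, so $\dim_\CC V(0)=N$ gives the claimed lower bound. The main obstacle in either route is the same: the finite-dimensional reduction, namely proving that a Gevrey solution is determined by, and reconstructible from, finitely many indicial coefficients uniformly in $\beta$, and that the resulting lifting map varies holomorphically in $\beta$. This is precisely where the colliding-exponent phenomenon at special $\beta$ must be absorbed by the appearance of logarithmic terms in the confluent limits, without letting the rank of $\Phi(\beta)$ increase, equivalently without letting $\dim\ker\Phi(\beta)$ drop.
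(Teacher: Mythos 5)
Your overall strategy --- deducing the bound for arbitrary $\beta$ from the generic count of Theorem \ref{Fer-10-theorem} by a confluence/semicontinuity argument --- is in fact the strategy behind the actual proof: the paper does not prove this statement itself but quotes it from \cite[Theorem 6.2]{Fer10}, where (as in the analogous Lemma \ref{lemma: Nilsson-lower-bound} of this paper) the non-generic case is handled by the explicit deformation procedure of \cite[Theorem 3.5.1]{SST}. However, both of your routes stop exactly at the step that constitutes the proof, and as written each has a genuine gap --- one you yourself flag as ``the main obstacle'' without resolving it.

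In the first route, the identification of the Gevrey solution space with $\ker\Phi(\beta)$ for a family $\Phi(\beta)$ of maps between \emph{fixed} finite-dimensional spaces depending \emph{polynomially} on $\beta$ is asserted, not proved, and it is the whole difficulty. Reconstructing a series from its leading (indicial) data means solving the recursions imposed by $I_A$ and the Euler operators, and the quantities one must divide by are values of falling factorials $[v]_u$ that vanish precisely at the special parameters; the natural lifting map is therefore rational in $\beta$ with poles along the bad locus, not polynomial, so rank semicontinuity does not apply as stated. Moreover, at special $\beta$ the solutions making up the deficit are logarithmic, and a model whose unknowns are coefficients of monomials $x^v$ cannot see them; any correct finite-dimensional reduction must build in the bound on logarithm degree (cf.\ Lemma \ref{lemma: deg-p-bound}) from the start. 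In the second route, the existence of $\lim_{t\to 0}V(t)$ ``in the relevant Grassmannian'' is not free: the ambient space of formal series along $Y_\tau$ is infinite-dimensional, so there is no compact Grassmannian to invoke, and even granting a formal limit you must show both that the limiting plane still has dimension $N$ and that each coefficient $f_\alpha(x_\tau)$ of a limit series is still holomorphic at $p$, which requires estimates \emph{uniform in $t$}. Near a collision of exponents the individual series $\phi_{v_\sigma^{\mathbf{k}}(t)}$ do not converge as $t\to 0$ (their normalizations degenerate); the correct objects to pass to the limit are divided differences of these series along the parameter line, whose limits are the explicit logarithmic series of \cite[Theorem 3.5.1]{SST}. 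Writing those divided differences down, checking their linear independence via distinct initial terms, and verifying that they remain Gevrey along $Y_\tau$ is precisely the missing content; observing that ``the Gevrey order is a geometric invariant'' gives constancy of the order in $t$, not the uniform constants needed to pass the estimates to the limit.
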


\subsection{Formal Nilsson solutions of $A$-hypergeometric $D$-modules}\label{subsection-Nilsson}

We recall here some definitions and results from \cite{DMM12}, see also \cite{SST} when $I_A$ is homogeneous. In the former paper the authors write the following results in terms of a regular triangulation of the matrix $\rho (A):=(\widetilde{a}_0 \; \widetilde{a}_1\cdots \widetilde{a}_n )$, that is constructed from $A$ by adding a first column of zeroes and then a first row of ones. It follows from the definition of regular subdivision (see \eqref{equality-subdivision} and \eqref{inequality-subdivision}) that for a subset $\sigma \subseteq \{1,\ldots, n\}$, we have that $\sigma\in T_w$ if and only if $\{0\}\cup \sigma \in T_{(0,w)}$.

Given a cone $\cC\subseteq \RR^n$, the \emph{dual cone} of $\cC$, denoted by $\cC^*$, is a closed cone consisting of vectors $u\in \RR^n$ such that $\langle w, u\rangle \geq 0$ for all $w\in \cC$ and all $u\in \cC^*$. If $\cC$ is full dimensional, then the cone $\cC^*$ is \emph{strongly convex} (i.e. it doesn't contain non trivial linear subspaces) and $\langle w, u\rangle> 0$ for all $w\in \mathring{\cC}$ and all nonzero $u\in\cC^*$

We say that $w\in \RR_{>0}^n$ is a \emph{weight vector} (for $H_A(\beta)$) if it belongs to the interior of a full dimensional cone of the $A$--hypergeometric fan.

For a weight vector $w\in \RR^n$ we denote by $\cC_w$ the interior of the (full dimensional) cone in the $A$--hypergeometric fan such that $w\in\cC_w$. Notice that $\cC_w\subseteq C(T_w)$.

\begin{definition}\cite[Definition 2.6]{DMM12}\label{def: formal-Nilsson}
Let $w$ be a weight vector for $H_A(\beta)$. Write $\log (x) =(\log x_1 ,\ldots, \log x_n)$. A \emph{basic Nilsson solution of $H_A(\beta)$ in the direction of $w$} is a series of the form 
\begin{equation}\label{eqn: basic-Nilsson}
 \phi=x^v \sum_{u\in C} x^u p_u (\log (x)),
\end{equation} where $v\in \CC^n$, that satisfies 
\begin{enumerate}
 \item[i)] $\phi$ is annihilated by the partial differential operators of $H_A(\beta)$;
 \item[ii)] $C$ is contained in $\ker_{\ZZ}(A)\cap \cC^*$ for some strongly convex open cone $\cC\subseteq \cC_w$ such that $w\in \cC$;
 \item[iii)] the $p_u$ are nonzero polynomials and there exists $K\in \ZZ$ such that $\deg (p_u)\leq K$ for all $u\in C$;
 \item[iv)] $0\in C$.
\end{enumerate} The set $\operatorname{supp}(\phi)=\{v+u |\; u \in C\}$ is called the \emph{support} of $\phi$.

The $\CC$--linear span of all basic Nilsson solutions of $H_A(\beta)$ in the direction of $w$ is called the \emph{space of formal Nilsson solutions of $H_A(\beta)$ in the direction of $w$} and it is denoted by $\cN_w(H_A(\beta))$.
\end{definition}

We define the $w$--weight of a term $x^{v} p (\log(x))$, where $v\in \CC^n$ and $p$ is a polynomial,  as the real value $\Re (\langle w,v\rangle)$. For a series $\phi$ consisting in a (possibly infinite) sum of terms, we say that it has an initial form if there exists the minimum for the set of  $w$--weights of all its nonzero terms. In this case, its initial form in the direction of $w$, denoted by $\ini_w(\phi)$, consists in the sum of all the terms of $\phi$ with minimum $w$--weight.

\begin{remark}\label{remark: initial-form-basic}
Notice that if a series $\phi$ as in \eqref{eqn: basic-Nilsson} satisfies all conditions in Definition \ref{def: formal-Nilsson} we have $\ini_w(\phi)=x^v p_0 (\log (x))$. 
\end{remark}

\begin{prop}\cite[Proposition 2.11]{DMM12}\label{prop: Nilsson-upper-bound}
Let $w\in\RR^n$ be a weight vector for $H_A(\beta)$, then 
$\dim_\CC (\cN_w(H_A(\beta)))\leq \rank (\ini_{(-w,w)}(H_A(\beta)))$.
\end{prop}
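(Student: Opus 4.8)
The plan is to realize $\cN_w(H_A(\beta))$ inside the solution space of the initial ideal by means of the initial-form map $\phi\mapsto \ini_w(\phi)$, and then to bound the dimension of the image by the holonomic rank. Concretely, I would establish three facts: (a) for every nonzero $\phi\in\cN_w(H_A(\beta))$ the initial form $\ini_w(\phi)$ is well defined and nonzero, so that $\ini_w$ is injective on $\cN_w(H_A(\beta))$; (b) $\ini_w(\phi)$ is annihilated by $\ini_{(-w,w)}(H_A(\beta))$; and (c) every such $\ini_w(\phi)$ is a finite sum of terms $x^{v'}p_{v'}(\log x)$ and hence lies in a space of dimension at most $\rank(\ini_{(-w,w)}(H_A(\beta)))$. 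Granting these, injectivity gives $\dim_\CC\cN_w(H_A(\beta))=\dim_\CC \ini_w(\cN_w(H_A(\beta)))$, and (b)--(c) bound the right-hand side by $\rank(\ini_{(-w,w)}(H_A(\beta)))$, which is exactly the assertion.

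For (a), I would first note that a basic Nilsson solution $\phi=x^v\sum_{u\in C}x^u p_u(\log x)$ has all its $w$-weights bounded below: since $C\subseteq \ker_{\ZZ}(A)\cap\cC^*$ with $w\in\cC$, one has $\langle w,u\rangle\ge 0$, so every term has $w$-weight at least $\Re\langle w,v\rangle$, a value attained at $u=0$ by condition iv). A general element of $\cN_w(H_A(\beta))$ is a finite sum of basic Nilsson solutions, so its set of $w$-weights is still bounded below. Moreover, since $\cC$ is open (hence full dimensional) the cone $\cC^*$ is strongly convex and $w$ is strictly positive on $\ker_{\ZZ}(A)\cap\cC^*\setminus\{0\}$; as that intersection is a finitely generated monoid, the occurring $w$-weights form a discrete subset of $\RR$. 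Therefore the minimal $w$-weight of any nonzero formal Nilsson series is attained, and only by finitely many exponents, so $\ini_w(\phi)$ is a nonzero finite sum of terms $x^{v'}p_{v'}(\log x)$. Injectivity follows at once: a nonzero series of this type has a nonzero initial form.

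For (b), I would carry out the standard weight bookkeeping. Applying $x^\alpha\partial^\gamma$ to a term $x^{v'}p(\log x)$ produces terms with exponent $v'-\gamma+\alpha$, lowering the $w$-weight by exactly the $(-w,w)$-weight $\langle w,\gamma-\alpha\rangle$; hence operators of high $(-w,w)$-weight produce contributions of low $w$-weight. For $P\in H_A(\beta)$ the $w$-weight of a contribution to $P\phi$ is $(\text{$w$-weight of a term of }\phi)-(\text{$(-w,w)$-weight of a term of }P)$, which is minimized only when both are extremal, so the terms of least $w$-weight in $P\phi$ are precisely $\ini_{(-w,w)}(P)\cdot\ini_w(\phi)$; since $P\phi=0$ this lowest-weight part vanishes, and as $P$ ranges over $H_A(\beta)$ we conclude that $\ini_w(\phi)$ is annihilated by $\ini_{(-w,w)}(H_A(\beta))$. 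For (c), each $\ini_w(\phi)$ is a finite sum $\sum_{v'}x^{v'}p_{v'}(\log x)$, hence a multivalued holomorphic function on the complement of the coordinate hyperplanes; being a solution of the holonomic ideal $\ini_{(-w,w)}(H_A(\beta))$, it belongs to the solution space at a nonsingular point, whose dimension is $\rank(\ini_{(-w,w)}(H_A(\beta)))$ by definition of the holonomic rank. This places the image of $\ini_w$ inside a space of the required dimension. The step I expect to be the main obstacle is the bookkeeping in (b): one must verify carefully that no contribution of lower $w$-weight arises from pairing non-leading parts of $P$ with non-leading parts of $\phi$, and that the falling-factorial coefficients from $\partial^\gamma x^{v'}$ and the $\log$-terms are tracked correctly, so that the least-weight part of $P\phi$ is genuinely $\ini_{(-w,w)}(P)\,\ini_w(\phi)$.
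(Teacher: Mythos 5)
The paper does not prove this proposition; it is quoted verbatim from \cite[Proposition 2.11]{DMM12}, and your argument is essentially the proof given there (initial forms of formal Nilsson solutions solve the initial system, as in the proof of \cite[Theorem 2.5.5]{SST}, and the span of these initial forms sits inside the solution space of the holonomic ideal $\ini_{(-w,w)}(H_A(\beta))$ at a generic point). Your weight bookkeeping in step (b) is correct: a pairing of a non-leading term of $P$ with a non-leading term of $\phi$ lands in $w$-weight $\mu-\nu$ with $\mu-\mu_{\min}=\nu-\nu_{\max}$, forcing both differences to vanish, so the weight-$(\mu_{\min}-\nu_{\max})$ component of $P\phi=0$ is exactly $\ini_{(-w,w)}(P)\bigl(\ini_w(\phi)\bigr)$.

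The one place your write-up is too quick is the deduction ``injectivity gives $\dim_\CC\cN_w(H_A(\beta))=\dim_\CC \ini_w(\cN_w(H_A(\beta)))$.'' The map $\phi\mapsto\ini_w(\phi)$ is not $\CC$-linear (initial forms of a sum can jump to higher weight when leading terms cancel), so its image is not a subspace and ``injective'' does not by itself yield a dimension count. What you actually need, and what your ingredients do supply, is the standard filtration argument: since the $w$-weights occurring in a fixed finite-dimensional space $V\subseteq\cN_w(H_A(\beta))$ form a discrete set bounded below, the subspaces $V_{\geq\mu}=\{\phi\in V:\ \text{all terms of }\phi\text{ have weight}\geq\mu\}$ give a finite separated exhaustive filtration, and $\phi\mapsto\ini_w(\phi)$ induces injections $V_{\geq\mu}/V_{>\mu}\hookrightarrow(\text{terms of weight }\mu)$; summing over $\mu$ shows $\dim_\CC V\leq\dim_\CC\operatorname{span}\{\ini_w(\phi):0\neq\phi\in V\}$. (Equivalently, Gaussian elimination produces a basis of $V$ whose initial forms are linearly independent.) With that lemma inserted, your steps (b) and (c) complete the proof exactly as in \cite{DMM12}.
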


We recall that a vector $v\in\CC^n$ is called an \emph{exponent} of $H_A(\beta)$ with respect to $w$ if $x^v$ is a solution of $\ini_{(-w,w)}(H_A(\beta))$.

\begin{theorem}{\rm \cite[Theorem 4.8]{DMM12}}\label{thm: generic-Nilsson-basis}
If $\beta$ is generic and $w$ is a weight vector for $H_A(\beta)$ then the set 
$$\{ \phi_v |\; v \mbox{ is an exponent of } H_A(\beta) \mbox{ with respect to } w\}$$ is a basis of $\cN_w (H_A (\beta))$, where $\phi_v$ is defined in \eqref{eqn:SST-series}.
\end{theorem}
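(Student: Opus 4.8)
The plan is to exhibit the set $\{\phi_v\}$ as a basis in three stages: membership ($\phi_v\in\cN_w(H_A(\beta))$ for each exponent $v$), $\CC$-linear independence, and a dimension count matching the upper bound of Proposition \ref{prop: Nilsson-upper-bound}. The genericity of $\beta$ will be used repeatedly, chiefly to guarantee that every exponent has empty negative support and that the indicial system $\ini_{(-w,w)}(H_A(\beta))$ carries no logarithmic solutions.

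For membership I would first note that since $x^v$ is annihilated by the Euler operators $E_i-\beta_i$ (which lie in $\ini_{(-w,w)}(H_A(\beta))$), one has $Av=\beta$; for generic $\beta$ this forces each coordinate of $v$ to be either non-integral or a non-negative integer, so $\nsupp(v)=\varnothing$ and in particular $v$ has minimal negative support. By \cite[Proposition 3.4.13]{SST} the series $\phi_v$ is then annihilated by all of $H_A(\beta)$, giving condition (i) of Definition \ref{def: formal-Nilsson}; its coefficients $[v]_{u_-}/[v+u]_{u_+}$ are finite and nonzero and involve no logarithms, so conditions (iii) and (iv) hold with $K=0$ and $0\in C$. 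The delicate point is condition (ii). Here I would use the hypothesis that $v$ is an \emph{exponent}: the requirement that $x^v$ solve $\ini_{(-w,w)}(H_A(\beta))$ is exactly what forces $x^v$ to be the $w$-lowest term of $\phi_v$, i.e.\ $\langle w',u\rangle\ge 0$ for every $u\in N_v$ and every $w'\in\cC_w$, so that $N_v\subseteq\cC_w^*$. Since $\cC_w$ is open and contains $w$, I can then choose a strongly convex open cone $\cC$ with $w\in\cC\subseteq\cC_w$; dualizing reverses inclusions, whence $N_v\subseteq\cC_w^*\subseteq\cC^*$ and condition (ii) holds. By Remark \ref{remark: initial-form-basic} this also yields $\ini_w(\phi_v)=x^v$.

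Linear independence is then formal. Suppose $\sum_v c_v\phi_v=0$ with not all $c_v$ zero, and let $m$ be the smallest $w$-weight $\Re\langle w,v\rangle$ occurring among the exponents $v$ with $c_v\ne 0$. Because each chosen cone $\cC$ is strongly convex, every term $x^{v+u}$ of $\phi_v$ with $u\ne 0$ has $w$-weight strictly larger than $\Re\langle w,v\rangle$; hence the part of the sum of $w$-weight $m$ is precisely $\sum_{\Re\langle w,v\rangle=m}c_v\,x^v$. Distinct exponents yield distinct, and hence $\CC$-linearly independent, monomials $x^v$, so all these $c_v$ vanish, contradicting the choice of $m$. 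Therefore the family $\{\phi_v\}$ is linearly independent.

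It remains to count. By Proposition \ref{prop: Nilsson-upper-bound}, $\dim_\CC\cN_w(H_A(\beta))\le\rank(\ini_{(-w,w)}(H_A(\beta)))$, so it suffices to show that the number of exponents equals this rank. For generic $\beta$ the exponents are pairwise distinct and the system $\ini_{(-w,w)}(H_A(\beta))$ has no logarithmic solutions, so its solution space is spanned by the monomials $x^v$ with $v$ an exponent and its holonomic rank is exactly their number. Combining the three stages, $\{\phi_v\}$ is a linearly independent subset of $\cN_w(H_A(\beta))$ whose cardinality realizes the upper bound, hence a basis. I expect the main obstacle to be this final equality together with condition (ii): both rest on genericity of $\beta$ and the full-dimensionality of $\cC_w$ (equivalently, on $w$ being a weight vector), which are precisely what prevent rank jumping and logarithmic solutions in the indicial system and what confine the support $N_v$ to a strongly convex cone, so that $\phi_v$ is a genuine basic Nilsson solution rather than a merely formal solution of $H_A(\beta)$.
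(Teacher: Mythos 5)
The paper does not prove this statement; it is imported verbatim as \cite[Theorem 4.8]{DMM12}, so there is no internal proof to compare against and your argument has to stand on its own. Its architecture --- (a) each $\phi_v$ is a basic Nilsson solution with $\ini_w(\phi_v)=x^v$, (b) distinct initial monomials give linear independence, (c) the count matches the upper bound of Proposition \ref{prop: Nilsson-upper-bound} because for generic $\beta$ the rank of $\ini_{(-w,w)}(H_A(\beta))$ equals the number of exponents --- is the standard and correct one. But the load-bearing step, condition ii) of Definition \ref{def: formal-Nilsson}, is asserted rather than proved. You write that ``the requirement that $x^v$ solve $\ini_{(-w,w)}(H_A(\beta))$ is exactly what forces $x^v$ to be the $w$-lowest term of $\phi_v$.'' It is not: the support $v+N_v$ of $\phi_v$ in \eqref{eqn:SST-series} is defined purely combinatorially from $\ker_{\ZZ}(A)$ and the negative support of $v$, and the fact that the monomial $x^v$ is killed by the initial system places no direct constraint on which $u\in N_v$ have $\langle w,u\rangle<0$; if the containment $N_v\subseteq\cC_w^*$ were tautological, the theorem would have no content. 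The honest route is through standard pairs: by Lemma \ref{lemma: exponent-positive-coordinates} and genericity of $\beta$, an exponent $v$ has $v_{\overline{\sigma}}=:a\in\NN^{\overline{\sigma}}$ for some $\sigma\in T_w$ with $(\partial^{a},\sigma)$ a standard pair of the monomial ideal $\ini_w(I_A)$ and $v_j\notin\ZZ$ for $j\in\sigma$, so that $N_v=\{u\in\ker_{\ZZ}(A)\,:\,u_{\overline{\sigma}}\geq-a\}$; if some $u\in N_v$ had $\langle w,u\rangle<0$, then $\ini_w(\partial^{u_+}-\partial^{u_-})=-\partial^{u_-}$ would put in $\ini_w(I_A)$ a monomial dividing $\partial^{a}\partial_{\sigma}^{(u_-)_{\sigma}}$, contradicting the standard-pair property. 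Only after this does $\ini_w(\phi_v)=x^v$ hold, and your linear-independence argument depends on it.

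Two smaller points. First, ``$Av=\beta$; for generic $\beta$ this forces each coordinate of $v$ to be either non-integral or a non-negative integer'' is a non sequitur: $Av=\beta$ alone permits negative integer coordinates compensated by non-integral ones. The conclusion $\nsupp(v)=\nothing$ is true, but it comes from Lemma \ref{lemma: exponent-positive-coordinates} together with the genericity of $v_{\sigma}=A_{\sigma}^{-1}(\beta-A_{\overline{\sigma}}a)$, not from $Av=\beta$. Second, the final count silently uses that for generic $\beta$ the indicial ideal is radical with exactly $\rank(\ini_{(-w,w)}(H_A(\beta)))$ distinct roots; this is a citable fact from \cite{SST} and \cite{DMM12}, but it is a theorem, not a consequence of ``no logarithmic solutions'' alone, and should be invoked as such.
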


\begin{theorem}\cite[Corollaries 4.9 and 4.11]{DMM12}\label{thm-DMM:generic-case}
If $\beta$ is generic and $w$ is a weight vector for $H_A(\beta)$ then
 \begin{equation}
\dim_{\CC}(\cN_w (H_A(\beta))=\rank (\operatorname{in}_{(-w,w)}(H_A(\beta)))=\deg (\operatorname{in}_w (I_A))=\sum_{\sigma \in T_w} \vol (\sigma). \label{equality-inw-Nilsson}
\end{equation}
\end{theorem}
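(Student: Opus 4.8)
The plan is to prove the chain of equalities by working from the purely combinatorial right-hand side back to the Nilsson space, in three steps.

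First I would establish $\deg(\ini_w(I_A)) = \sum_{\sigma \in T_w} \vol(\sigma)$. Since $w$ is a weight vector it lies in the interior of a maximal cone of the $A$--hypergeometric fan, which by Remark \ref{rem:hyper-refine-secondary} refines the secondary fan, so $T_w$ is a regular triangulation of $A$. I would then invoke the standard correspondence between regular triangulations and initial ideals of toric ideals: the radical of the monomial ideal $\ini_w(I_A)$ is the Stanley--Reisner ideal of $T_w$, and the multiplicity of $\ini_w(I_A)$ along the minimal prime attached to a maximal simplex $\sigma$ equals the index $[\ZZ^d:\ZZ A_\sigma]=|\det A_\sigma|=\vol(\sigma)$. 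Summing these multiplicities over the facets of $T_w$ yields the degree; because Gr\"obner degeneration preserves the Hilbert polynomial, this also equals $\deg(I_A)=\vol(A)$, which the triangulation decomposes as the stated sum.

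Second I would prove $\rank(\ini_{(-w,w)}(H_A(\beta))) = \deg(\ini_w(I_A))$. The initial ideal $\ini_{(-w,w)}(H_A(\beta))$ is generated by $\ini_w(I_A)$ together with the Euler operators $E_i-\beta_i$, which are $(-w,w)$--homogeneous of weight zero and hence equal to their own initial forms. For generic $\beta$ this is a holonomic system of \emph{fake-exponent} type, and I would compute its rank by showing its holomorphic solution space is spanned by the monomials $x^v$ with $v$ an exponent: the initial toric relations constrain $v$ to the lattice data prescribed by the facets of $T_w$, while the Euler operators impose $Av=\beta$. The genericity of $\beta$ guarantees that no two exponents coincide and that no logarithmic terms arise, so the number of exponents equals both the rank of the initial system and, through the standard-monomial count of $\ini_w(I_A)$ weighted by the $\vol(\sigma)$, its degree.

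Third I would prove $\dim_\CC(\cN_w(H_A(\beta))) = \rank(\ini_{(-w,w)}(H_A(\beta)))$. The inequality $\leq$ is precisely Proposition \ref{prop: Nilsson-upper-bound}. For the reverse inequality I would apply Theorem \ref{thm: generic-Nilsson-basis}: for generic $\beta$ the series $\phi_v$ indexed by the exponents $v$ of $H_A(\beta)$ with respect to $w$ form a basis of $\cN_w(H_A(\beta))$, so $\dim_\CC(\cN_w(H_A(\beta)))$ equals the number of exponents, which by the second step equals the rank. This closes the chain. The main obstacle I expect is the second step: transferring the holonomic rank of the initial $D$--ideal to the commutative invariant $\deg(\ini_w(I_A))$. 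One must check carefully that for generic $\beta$ the initial system carries no logarithmic solutions and that the exponents biject with the standard monomials of $\ini_w(I_A)$ counted with multiplicities $\vol(\sigma)$; pinning down the genericity of $\beta$ so that these counts match exactly, rather than merely bounding one by the other, is the delicate point, while everything else is either a citation or standard toric and Gr\"obner--degeneration theory.
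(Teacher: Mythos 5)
The paper does not prove this statement at all: it is quoted verbatim from \cite{DMM12}, so your attempt has to be measured against that source's argument rather than anything in the text. Your three-step architecture is in fact the standard one and matches \cite{DMM12}: the identity $\deg(\ini_w(I_A))=\sum_{\sigma\in T_w}\vol(\sigma)$ comes from the Stanley--Reisner/multiplicity description of the monomial ideal $\ini_w(I_A)$ in \cite[Chapter 8]{Sturm}; the rank of the initial system is counted by exponents/standard pairs; and the last equality is exactly Theorem \ref{thm: generic-Nilsson-basis} played against Proposition \ref{prop: Nilsson-upper-bound}. So the plan is sound, but two steps as written would fail.

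First, your claim that $\deg(\ini_w(I_A))=\deg(I_A)=\vol(A)$ ``because Gr\"obner degeneration preserves the Hilbert polynomial'' is false unless $I_A$ is homogeneous, which is precisely the case this theorem is designed to go beyond: flat degeneration of the Hilbert polynomial requires the standard grading, and in the irregular case $\sum_{\sigma\in T_w}\vol(\sigma)$ genuinely differs from $\vol(A)$. The paper's final example is a counterexample: there $\sum_{\sigma\in T_w}\vol(\sigma)=2$ while $\vol(A)=3$. Fortunately this clause is not load-bearing; your first sentence of step one (degree equals the sum of multiplicities over the facets of $T_w$) already yields the needed identity, so simply delete the appeal to $\vol(A)$. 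Second, in step two you assert that $\ini_{(-w,w)}(H_A(\beta))$ \emph{is generated by} $\ini_w(I_A)$ and the Euler operators. A priori one only has the containment $\ini_{(-w,w)}(H_A(\beta))\supseteq\langle \ini_w(I_A),\,E_1-\beta_1,\ldots,E_d-\beta_d\rangle$; equality for generic $\beta$ is itself a nontrivial theorem (\cite[Theorem 3.1.3]{SST} in the homogeneous case) and is exactly the delicate point you worried about. The repair is that you do not need equality: the containment gives $\rank(\ini_{(-w,w)}(H_A(\beta)))\leq\rank(\langle \ini_w(I_A),E-\beta\rangle)$, the right-hand side is computed by your exponent/standard-pair count to be $\deg(\ini_w(I_A))$ for generic $\beta$, and the reverse inequality then comes from step three, since the series $\phi_v$ provide that many linearly independent elements of $\cN_w(H_A(\beta))$ and Proposition \ref{prop: Nilsson-upper-bound} bounds this dimension by the rank. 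Recasting step two as this one-sided estimate closes the whole chain of equalities without the unproved generation statement.
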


Let $w\in\RR^n_{>0}$ be a weight vector for $H_A(\beta)$. We say that $w$ is a \emph{perturbation} of a vector $w_0\in \RR^n$ if there exists a full dimensional cone $\cC$ of the $A$--hypergeometric fan such that $w_0\in \cC$ and $w\in \mathring{\cC}$.

We notice that a weight vector $w$ is a perturbation of $(1,\ldots,1)$ if and only if the regular triangulation $T_w$ is a refinement of $\Gamma_A$. 

\begin{theorem}\cite[Theorem 6.4]{DMM12}\label{thm-DDM:convergent-generic}
Assume that $A$ is pointed. If $w$ is a perturbation of $(1,\ldots,1)$ then, for all $\beta\in\CC^d$,
\begin{equation}
\dim_{\CC}(\cN_w (H_A(\beta)))=\rank(\ini_{(-w,w)}(H_A(\beta))=\rank (M_A(\beta)).
\end{equation}
More precisely, $\cN_w(H_A(\beta))$ is the space of convergent series solutions of $M_A(\beta)$ at any point in a certain nonempty open set $\mathcal{U}_{w}\subseteq\CC^n$.
\end{theorem}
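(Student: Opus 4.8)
The plan is to squeeze three quantities between one another: the dimension $N\defeq\dim_\CC(\cN_w(H_A(\beta)))$, the rank of the initial ideal $R\defeq\rank(\ini_{(-w,w)}(H_A(\beta)))$, and the holonomic rank $M\defeq\rank(M_A(\beta))$. Two inequalities come for free: Proposition \ref{prop: Nilsson-upper-bound} gives $N\le R$, and the semicontinuity of holonomic rank under Gr\"obner degeneration, \cite[Theorem 2.5.1]{SST}, gives $M\le R$. It therefore suffices to manufacture enough genuine solutions; concretely, I would construct a family of exactly $R$ linearly independent \emph{convergent} basic Nilsson solutions of $H_A(\beta)$ in the direction of $w$. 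Such a family simultaneously forces $N\ge R$ (the members lie in $\cN_w(H_A(\beta))$ and are independent, so $N=R$) and $R\le M$ (being convergent solutions of $M_A(\beta)$, they span an $R$-dimensional subspace of the at-most-$M$-dimensional space of germs of holomorphic solutions at a nonsingular point, so $R=M$). Combined with the two free inequalities this closes the chain $M=R=N$.

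For the construction I would invoke the canonical-series machinery of \cite[Chapters~2.5--2.6]{SST}. First read off the exponents of $H_A(\beta)$ with respect to $w$, i.e. the monomial-with-logarithm solutions of $\ini_{(-w,w)}(H_A(\beta))$; counted with logarithmic multiplicities there are precisely $R$ of them. For each starting term I extend it, order by order in increasing $(-w,w)$-weight, to a formal series $\phi=x^v\sum_u x^u p_u(\log(x))$ annihilated by $H_A(\beta)$. The Euler operators force $\operatorname{supp}(\phi)\subseteq v+\ker_\ZZ(A)$, and because the recursion only raises the $(-w,w)$-weight, the exponents $u$ lie in $\ker_\ZZ(A)\cap\cC^*$ for a strongly convex cone $\cC\subseteq\cC_w$ with $w\in\cC$; the logarithmic degrees remain bounded and $0\in C$, so each $\phi$ is a basic Nilsson solution in the sense of Definition \ref{def: formal-Nilsson}. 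Their $w$-initial forms are the distinct starting terms (Remark \ref{remark: initial-form-basic}), which yields linear independence.

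The genuinely analytic point, and the step I expect to be the main obstacle, is \emph{convergence} of all these series on a common nonempty open set $\mathcal{U}_w$. When $I_A$ is homogeneous, $M_A(\beta)$ is regular holonomic and convergence is classical. In the general pointed case $M_A(\beta)$ may be irregular, so I would transport the estimate to the homogenization $\rho(A)$, whose $A$-hypergeometric system is regular holonomic: the hypothesis that $w$ is a perturbation of $(1,\ldots,1)$, equivalently that $T_w$ refines $\Gamma_A$, is exactly the combinatorial condition placing a regular triangulation lifting $T_w$ in the $\Gamma_A$-part of $\rho(A)$, so that dehomogenizing the (convergent, because regular) canonical series of $H_{\rho(A)}(\wt{\beta})$ produces convergent series for $H_A(\beta)$ on a region $\mathcal{U}_w$ dictated by $\cC$. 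This is the analogue of the Gel'fand--Kapranov--Zelevinsky convergence estimate for $\Gamma$-series \cite{GKZ}, and carrying it out uniformly in $\beta$, in particular for rank-jumping $\beta\in\varepsilon(A)$ where extra logarithmic solutions must appear to account for $R=M>\vol(A)$, is where the real work lies.

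Once convergence is secured the sandwich $M=R=N$ closes as above. Moreover the $R=M$ convergent basic Nilsson solutions, being linearly independent holomorphic solutions and hence a basis of the full $M$-dimensional solution space on $\mathcal{U}_w$, show that $\cN_w(H_A(\beta))$ equals the space of convergent series solutions of $M_A(\beta)$ on $\mathcal{U}_w$, which is the ``more precisely'' assertion.
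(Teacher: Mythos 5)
This statement is quoted from \cite[Theorem 6.4]{DMM12}; the paper itself gives no proof, so your attempt has to be measured against the argument in that reference. Your overall skeleton is the right one: the two free inequalities $\dim_\CC\cN_w(H_A(\beta))\le\rank(\ini_{(-w,w)}(H_A(\beta)))$ (Proposition \ref{prop: Nilsson-upper-bound}) and $\rank(M_A(\beta))\le\rank(\ini_{(-w,w)}(H_A(\beta)))$ (Gr\"obner degeneration), closed by exhibiting a full supply of linearly independent \emph{convergent} basic Nilsson solutions; and homogenization to $\rho(A)$ is indeed the key tool.

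The genuine gap is in your second paragraph. You treat the \emph{formal} construction --- extending each of the $\rank(\ini_{(-w,w)}(H_A(\beta)))$ starting terms order by order to a basic Nilsson solution of $H_A(\beta)$ itself --- as routine canonical-series machinery, and reserve the homogenization only for the convergence estimate. For an irregular system that recursion is precisely the step that is not available: nothing guarantees that the series it produces has support in a strongly convex subcone of $\cC_w^*$ and uniformly bounded logarithmic degrees, i.e. that it is a basic Nilsson solution at all. If that step were automatic, one would obtain $\dim_\CC\cN_w(H_A(\beta))=\rank(\ini_{(-w,w)}(H_A(\beta)))$ for \emph{every} weight vector $w$ and every $\beta$, which is exactly what is not known: Proposition \ref{prop: Nilsson-upper-bound} is only an upper bound, Lemma \ref{lemma: Nilsson-lower-bound} only gives the weaker lower bound $\sum_{\sigma\in T_w}\vol(\sigma)$, and Corollary \ref{cor: equalities-still-hold} establishes equality only for $\beta\notin\varepsilon(A)$. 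In \cite{DMM12} the construction itself goes through $\rho(A)$: one builds the canonical series of the regular holonomic system $H_{\rho(A)}(\beta_0,\beta)$ for generic $\beta_0$, where the existence, counting and convergence results of \cite{SST} hold for all parameters including rank-jumping ones, and the hypothesis that $w$ is a perturbation of $(1,\ldots,1)$ (equivalently, that $T_w$ refines $\Gamma_A$) is what guarantees that dehomogenizing loses no dimension and lands in genuine, convergent, basic Nilsson solutions of $H_A(\beta)$. So the perturbation hypothesis must enter the existence and counting argument, not only the convergence argument; as written, your sketch defers exactly the content of the theorem to ``where the real work lies.''
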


\section{Some remarks on formal Nilsson solutions of $H_A(\beta)$.}\label{section-Remarks}

In this section we provide some additional results about $\cN_w (H_A(\beta))$.

\begin{lemma}\label{lemma: Nilsson-lower-bound}
For any $\beta \in \CC^d$ and any weight vector $w$,
\begin{equation}
\dim_{\CC}(\cN_w (H_A(\beta))\geq \sum_{\sigma \in T_w} \vol (\sigma). \label{eqn: Nilsson-lower-bound}
\end{equation}
\end{lemma}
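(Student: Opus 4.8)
The plan is to construct $\sum_{\sigma\in T_w}\vol(\sigma)$ linearly independent basic Nilsson solutions of $H_A(\beta)$ in the direction of $w$. The candidates are the $\Gamma$-series $\phi_{v_\sigma^{\mathbf{k}}}$ of \eqref{eqn:SST-series} attached to the pairs $(\sigma,\mathbf{k})$ with $\sigma\in T_w$ a maximal simplex and $\mathbf{k}\in\Omega_\sigma$; since $|\Omega_\sigma|=\vol(\sigma)$ there are exactly $\sum_{\sigma\in T_w}\vol(\sigma)$ of them, and each satisfies $A\,v_\sigma^{\mathbf{k}}=\beta$. The role of the hypothesis $\sigma\in T_w$ is that, by \eqref{w-Bsigma}, it is equivalent to $wB_\sigma>0$, so the linear form $\langle w,\cdot\rangle$ is strictly positive on the simplicial cone $\pos(B_\sigma)$ that contains the support directions of $\phi_{v_\sigma^{\mathbf{k}}}$. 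Hence, after replacing $v_\sigma^{\mathbf{k}}$ by a representative of minimal negative support, $\phi_{v_\sigma^{\mathbf{k}}}$ is a basic Nilsson solution in the direction of $w$ whose initial form $\ini_w(\phi_{v_\sigma^{\mathbf{k}}})$ is a constant multiple of $x^{v_\sigma^{\mathbf{k}}}$ (Remark \ref{remark: initial-form-basic}). I want to emphasise that the strongly convex cone witnessing condition (ii) of Definition \ref{def: formal-Nilsson} is cut out by $wB_\sigma>0$ and therefore depends only on $w$, never on $\beta$.

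For generic $\beta$ the exponents $v_\sigma^{\mathbf{k}}$ lie in pairwise distinct classes modulo $\ker_\ZZ(A)$, the monomials $x^{v_\sigma^{\mathbf{k}}}$ are distinct, and by Theorem \ref{thm: generic-Nilsson-basis} (compare Theorem \ref{thm-DMM:generic-case}) the $\phi_{v_\sigma^{\mathbf{k}}}$ form a basis. The only obstruction at a special parameter $\beta_0$ is that several exponents may fall into one class modulo $\ker_\ZZ(A)$, so the corresponding series coincide. To resolve this I would deform: choose a generic line $\beta(t)=\beta_0+t\gamma$ with $\beta(t)$ generic for $0<|t|<\epsilon$, and set $\Phi_{(\sigma,\mathbf{k})}(x;t):=\phi_{v_\sigma^{\mathbf{k}}(\beta(t))}$, whose coefficients $[v]_{u_-}/[v+u]_{u_+}$ are rational in $t$. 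For a cluster of $m$ pairs whose exponents collide at $t=0$ I would take $\CC(t)$-linear combinations of the associated $\Phi_{(\sigma,\mathbf{k})}$ that are holomorphic and $\CC$-linearly independent at $t=0$ (a Vandermonde combination built from the first $m$ derivatives in $t$). Since $H_A(\beta(t))$ depends continuously on $t$, the values at $t=0$ of these combinations are annihilated by $H_A(\beta_0)$; the powers of $\log(x)$ appear automatically, because a pole of the combination multiplies the difference of two colliding branches and $\partial_t x^{v(t)}=x^{v(t)}\langle \log(x),v'(t)\rangle$.

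Because the support cone is independent of $\beta$, each series obtained in the limit still has its support in the same strongly convex cone $\pos(B_\sigma)$ and has logarithmic degree bounded by the cluster size, so conditions (i)--(iv) of Definition \ref{def: formal-Nilsson} survive the passage to $t=0$ and every limit is again a basic Nilsson solution in the direction of $w$. Linear independence I would read from initial forms: limits attached to different classes modulo $\ker_\ZZ(A)$ have distinct leading monomials $x^{v}$, while the $m$ limits inside a single cluster carry the distinct logarithmic degrees $0,1,\dots,m-1$, so $\ini_w$ distinguishes all of them. This yields $\sum_{\sigma\in T_w}\vol(\sigma)$ linearly independent basic Nilsson solutions and hence the lower bound, for every $\beta$. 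The main obstacle is the analytic control of the Frobenius limit: one must pick $\gamma$ generic enough that all collisions are resolved to first order while simultaneously tracking the zeros and poles of the Pochhammer factors $[v(t)+u]_{u_+}$ as $t\to0$, so that the normalised combinations are holomorphic, nonzero, and of minimal negative support at $t=0$.
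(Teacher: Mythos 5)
Your proposal is correct and follows essentially the same route as the paper: for generic $\beta$ the $\Gamma$-series attached to the simplices of $T_w$ form a basis of $\cN_w(H_A(\beta))$ of the right cardinality, and for special $\beta$ one perturbs the parameter and takes Frobenius-type limits exactly as in the procedure of [SST, Theorem~3.5.1], which is precisely what the paper cites. (One small slip: the initial form of $\phi_{v_\sigma^{\mathbf{k}}}$ is $x^{\widetilde{v}}$ for the support element $\widetilde{v}$ of minimal $w$-weight, which need not be $v_\sigma^{\mathbf{k}}$ itself --- cf. the paper's Remark~\ref{remark: phiv-constant-multiple} and the remark following Corollary~\ref{cor: w-constant-Nilsson} --- but this does not affect the linear-independence argument, which rests on disjoint supports and distinct logarithmic degrees.)
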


\begin{proof}
By Theorem \ref{thm-DMM:generic-case} equality holds in \eqref{eqn: Nilsson-lower-bound} when $\beta$ is generic and, by Theorem \ref{thm: generic-Nilsson-basis}, there is a basis of $\cN_w (H_A(\beta))$ that consists of the set of series $\phi_v$, see \eqref{eqn:SST-series}, for $v$ varying in the set of exponents of $H_A(\beta)$ with respect to $w$. In this situation, when $\beta$ is not generic, we can apply the same  procedure as in the proof of \cite[Theorem 3.5.1]{SST} and obtain a set of linearly independent formal Nilsson solutions of $H_A(\beta)$ in the direction of $w$. The cardinality of this set is the rightmost quantity in \eqref{eqn: Nilsson-lower-bound}. 
\end{proof}

\begin{cor}\label{cor: equalities-still-hold}
If $w$ is a weight vector, then \eqref{equality-inw-Nilsson} holds for any $\beta\in\CC^d\setminus\varepsilon(A)$.
\end{cor}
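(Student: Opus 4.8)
The plan is to turn \eqref{equality-inw-Nilsson} into a two-sided estimate and then isolate the only quantity that can depend on $\beta$, namely $\rank(\ini_{(-w,w)}(H_A(\beta)))$. First I would note that the two rightmost terms of \eqref{equality-inw-Nilsson}, $\deg(\ini_w(I_A))$ and $\sum_{\sigma\in T_w}\vol(\sigma)$, depend only on $A$ and $w$; since Theorem \ref{thm-DMM:generic-case} asserts their equality for generic $\beta$ and neither side involves $\beta$, that equality holds for every $\beta\in\CC^d$. Combining Lemma \ref{lemma: Nilsson-lower-bound} with Proposition \ref{prop: Nilsson-upper-bound} then gives, for all $\beta$,
\[
\sum_{\sigma\in T_w}\vol(\sigma)\ \le\ \dim_\CC(\cN_w(H_A(\beta)))\ \le\ \rank(\ini_{(-w,w)}(H_A(\beta))).
\]
Hence it suffices to prove the single reverse inequality $\rank(\ini_{(-w,w)}(H_A(\beta)))\le\sum_{\sigma\in T_w}\vol(\sigma)$ whenever $\beta\notin\varepsilon(A)$: together with the displayed chain this collapses all of \eqref{equality-inw-Nilsson} into equalities.

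For the key estimate I would first make the initial ideal explicit. Since each Euler operator $E_i-\beta_i$ is $(-w,w)$--homogeneous of weight $0$, one has $\ini_{(-w,w)}(H_A(\beta))=\ini_w(I_A)+\langle E_1-\beta_1,\dots,E_d-\beta_d\rangle$, so this is the Euler--Koszul (``fake hypergeometric'') system attached to the $A$--graded ring $\gr_w(S_A)=\CC[\partial]/\ini_w(I_A)$. The point I would exploit is that, as $\beta$ varies, $\rank(\ini_{(-w,w)}(H_A(\beta)))$ is upper semicontinuous and takes its generic value $\sum_{\sigma\in T_w}\vol(\sigma)$ off a proper closed ``jump locus'' $J_w\subseteq\CC^d$; what the corollary really asks is that $J_w\subseteq\varepsilon(A)$.

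To establish $J_w\subseteq\varepsilon(A)$ I would invoke the homological description of rank jumps used in \cite{MMW}: for an $A$--graded module the excess of holonomic rank over its degree at a parameter is governed by the higher Euler--Koszul homology, equivalently by the $A$--graded local cohomology $\bigoplus_{i<d}H^i_\mm(-)$ in the degrees selected by $\beta$, and for $S_A$ this recovers exactly $\varepsilon(A)$. The decisive structural fact I would feed in is that the $w$--degeneration preserves the $A$--graded Hilbert function, $\dim_\CC(S_A)_b=\dim_\CC(\gr_w(S_A))_b$ for every $b\in\ZZ^d$, since within a fixed $A$--degree the initial form merely selects a basis of a space of unchanged dimension. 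I would then try to compare the homology detecting jumps for $\gr_w(S_A)$ with that for $S_A$ and conclude that the degenerate ring cannot jump at a parameter where $S_A$ does not, giving $\rank(\ini_{(-w,w)}(H_A(\beta)))=\sum_{\sigma\in T_w}\vol(\sigma)$ for $\beta\notin\varepsilon(A)$.

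The main obstacle is precisely this comparison. Passing to an initial ideal generally enlarges local cohomology, so a degreewise bound $\dim_\CC H^i_\mm(\gr_w(S_A))_a\le\dim_\CC H^i_\mm(S_A)_a$ is simply false and cannot be the mechanism; the argument must instead isolate which combination of local cohomology groups actually enters the rank--jump count and show that this combination is controlled by the preserved $A$--graded Hilbert function and by the degeneration flatness, so that the two jump loci coincide. As a consistency check and a genuinely easy special case, when $A$ is pointed and $T_w$ refines $\Gamma_A$ one has $\sum_{\sigma\in T_w}\vol(\sigma)=\vol(A)$, and Theorem \ref{thm-DDM:convergent-generic} already yields all the equalities for every $\beta$ with common value $\rank(M_A(\beta))$, which equals $\vol(A)$ exactly when $\beta\notin\varepsilon(A)$; the work outlined above is what upgrades this from perturbations of $(1,\dots,1)$ to an arbitrary weight vector $w$.
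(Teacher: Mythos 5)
Your reduction is the same as the paper's: the two rightmost quantities in \eqref{equality-inw-Nilsson} are independent of $\beta$, so Theorem \ref{thm-DMM:generic-case} gives $\deg(\ini_w(I_A))=\sum_{\sigma\in T_w}\vol(\sigma)$ for free, and Lemma \ref{lemma: Nilsson-lower-bound} together with Proposition \ref{prop: Nilsson-upper-bound} sandwiches $\dim_\CC(\cN_w(H_A(\beta)))$ between $\sum_{\sigma\in T_w}\vol(\sigma)$ and $\rank(\ini_{(-w,w)}(H_A(\beta)))$. That part is correct and is exactly how the paper closes the first equality. The problem is that everything then hinges on the single claim that $\rank(\ini_{(-w,w)}(H_A(\beta)))$ does not exceed (equivalently, is constant and equal to) $\sum_{\sigma\in T_w}\vol(\sigma)$ for $\beta\notin\varepsilon(A)$, and this is precisely the step you do not prove. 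You sketch a strategy via Euler--Koszul homology and local cohomology of $\gr_w(S_A)$ versus $S_A$, and then you yourself flag that the needed comparison of jump loci is ``the main obstacle'' and that the naive degreewise bound on local cohomology is false. An honest assessment: the proposal reduces the corollary to an open sub-claim rather than proving it. The paper does not carry out this comparison either; it imports it wholesale by citing \cite[Theorem 4.28]{slopes} and \cite[Lemma 3.1]{Ber-Fer}, which together assert that $\rank(\ini_{(-w,w)}(H_A(\beta)))$ is constant on $\CC^d\setminus\varepsilon(A)$. Without either carrying out your homological program or invoking those results, the argument is incomplete.

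A secondary issue: you assert at the outset that $\ini_{(-w,w)}(H_A(\beta))=\ini_w(I_A)+\langle E_1-\beta_1,\dots,E_d-\beta_d\rangle$ because the Euler operators are $(-w,w)$-homogeneous. Homogeneity of the generators only gives the inclusion of the ``fake'' initial ideal into the true one; equality is known for generic $\beta$ (cf.\ \cite[Theorem 3.1.3]{SST}) but can fail for special parameters, and determining when it holds is itself part of the content of \cite{slopes}. So even the identification of the object whose rank you want to bound is not justified at the level of generality ($\beta\notin\varepsilon(A)$) that the corollary requires.
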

\begin{proof}
By \cite[Theorem 4.28]{slopes} and \cite[Lemma 3.1]{Ber-Fer}, $\rank (\operatorname{in}_{(-w,w)}(H_A(\beta)))$ is constant for $\beta\in \CC^d\setminus \varepsilon (A)$ and hence the second equality in \eqref{equality-inw-Nilsson} holds in this case too. The first equality in \eqref{equality-inw-Nilsson} now follows from Lemma \ref{lemma: Nilsson-lower-bound} and Proposition \ref{prop: Nilsson-upper-bound}.
\end{proof}

The following result states that the basis given in Theorem \ref{thm: generic-Nilsson-basis} only depends, up to multiplication of their elements by nonzero scalars, on the regular triangulation $T_w$ and not on the cone $\cC_w\subseteq C(T_w)$. 

\begin{prop}\label{prop: generic-basic-Nilsson}
If $\beta\in\CC^d$ is generic and $w$ is a weight vector, then the set 
$$\mathcal{B}_w (\beta) :=\{\phi_{v_\sigma^{\mathbf{k}}}: \; \sigma \in T_w , \; \mathbf{k} \in \Omega_{\sigma}\}$$ is a basis of $\cN_w (H_A (\beta))$.
\end{prop}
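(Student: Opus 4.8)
The plan is to show that $\mathcal{B}_w(\beta)$ coincides, up to nonzero scalar multiples, with the basis from Theorem \ref{thm: generic-Nilsson-basis}, namely the set $\{\phi_v : v \text{ an exponent of } H_A(\beta) \text{ w.r.t. } w\}$. Since $\beta$ is generic, Theorem \ref{thm: generic-Nilsson-basis} already tells us this latter set is a basis of $\cN_w(H_A(\beta))$, so it suffices to establish a bijection between the exponents of $H_A(\beta)$ with respect to $w$ and the index set $\{(\sigma,\mathbf{k}) : \sigma \in T_w,\ \mathbf{k}\in\Omega_\sigma\}$ under which $\phi_v$ and $\phi_{v_\sigma^{\mathbf{k}}}$ agree up to a nonzero constant. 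First I would recall that the cardinality count already matches: by Theorem \ref{thm-DMM:generic-case} the number of exponents equals $\dim_\CC(\cN_w(H_A(\beta))) = \sum_{\sigma\in T_w}\vol(\sigma)$, while $\Omega_\sigma$ has cardinality $\vol(\sigma)$ by construction, so $|\{(\sigma,\mathbf{k})\}| = \sum_{\sigma\in T_w}\vol(\sigma)$ as well. Thus the essential content is to match the two families and verify linear independence.

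Next I would analyze the structure of an exponent $v$ with respect to $w$. The key point is that for generic $\beta$, the initial ideal $\ini_{(-w,w)}(H_A(\beta))$ is governed by the triangulation $T_w$: the fake exponents are parametrized by the maximal simplices $\sigma\in T_w$ together with the standard monomials of $\ini_w(I_A)$, and genericity of $\beta$ ensures there are no logarithmic terms (the $p_u$ are constants). The standard pairs of $\ini_w(I_A)$ associated to a simplex $\sigma$ are counted by $\vol(\sigma)$, and these correspond precisely to the classes $\mathbf{k}\in\Omega_\sigma$. For each such pair, the corresponding exponent is exactly a vector of the form $v_\sigma^{\mathbf{k}}$: its $\sigma$-coordinates are $A_\sigma^{-1}(\beta - A_{\overline{\sigma}}\mathbf{k})$ and its $\overline{\sigma}$-coordinates are $\mathbf{k}$, which is forced by the requirement that $v$ solve the Euler equations ($Av=\beta$) and that $x^v$ be annihilated by $\ini_w(I_A)$. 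So the exponents are, up to the equivalence identifying $v$ with $v+u$ for $u\in\ker_\ZZ(A)$, exactly the vectors $v_\sigma^{\mathbf{k}}$.

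The remaining step is to reconcile $\phi_v$ (indexed by exponents) with $\phi_{v_\sigma^{\mathbf{k}}}$ (indexed by our explicit vectors). Here I would invoke Remark \ref{remark: phiv-constant-multiple}: for generic $\beta$ each exponent $v$ lies in $(\CC\setminus\ZZ_{<0})^n$ after a suitable representative is chosen, so $\phi_v = \Gamma(v+1)\varphi_v$, and since $\varphi_v$ depends only on the class $v + \ker_\ZZ(A)$, any two representatives give the same basic Nilsson solution up to a nonzero scalar. Consequently $\phi_{v_\sigma^{\mathbf{k}}}$ equals the corresponding $\phi_v$ up to a nonzero constant, so $\mathcal{B}_w(\beta)$ and the basis of Theorem \ref{thm: generic-Nilsson-basis} span the same space and have the same cardinality; hence $\mathcal{B}_w(\beta)$ is itself a basis.

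I expect the main obstacle to be the clean identification of the exponents with the vectors $v_\sigma^{\mathbf{k}}$, in particular verifying that each exponent has minimal negative support (so that $\phi_{v_\sigma^{\mathbf{k}}}$ is genuinely a solution, via \cite[Proposition 3.4.13]{SST}) and that distinct pairs $(\sigma,\mathbf{k})$ yield genuinely distinct basic Nilsson solutions with distinct supports. For generic $\beta$ the minimal-negative-support and distinctness conditions should follow because no two of the finitely many exponents can collide under the genericity hypothesis, but this requires carefully tracking how the standard pairs of $\ini_w(I_A)$ for a fixed simplex are indexed by $\Omega_\sigma$ and checking that the map $(\sigma,\mathbf{k})\mapsto$ (class of $v_\sigma^{\mathbf{k}}$) is injective.
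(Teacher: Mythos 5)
Your overall strategy (identify each $\phi_{v_\sigma^{\mathbf{k}}}$ with some $\phi_v$, $v$ an exponent, up to a nonzero scalar via Remark \ref{remark: phiv-constant-multiple}, then quote Theorem \ref{thm: generic-Nilsson-basis} and the cardinality count from Theorem \ref{thm-DMM:generic-case}) is the right endgame, and your observations that $\nsupp(v_\sigma^{\mathbf{k}})=\nothing$ for generic $\beta$ and that the difference of two such vectors in the same class lies in $N_v$ are correct. However, the load-bearing step of your argument --- that the exponents of $H_A(\beta)$ with respect to $w$ are \emph{exactly} the vectors $v_\sigma^{\mathbf{k}}$ modulo $\ker_{\ZZ}(A)$, i.e.\ that for each fixed $\sigma\in T_w$ the top-dimensional standard pairs $(\partial^a,\sigma)$ of $\ini_w(I_A)$ produce vectors $a$ hitting each class of $\Omega_\sigma$ exactly once --- is asserted rather than proved, and you flag it yourself as ``the main obstacle.'' Matching cardinalities ($\vol(\sigma)$ standard pairs versus $|\Omega_\sigma|=\vol(\sigma)$) does not by itself give injectivity of the map $a\mapsto [A_{\ogamma}a]$ into $\ZZ^d/\ZZ A_\sigma$ (here I mean $\overline\sigma$), so as written there is a genuine gap: without that bijection you cannot conclude that every exponent is reached, nor that distinct pairs $(\sigma,\mathbf{k})$ give independent series.

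The paper circumvents this entirely by arguing at the level of the series rather than of the standard pairs. Linear independence of $\mathcal{B}_w(\beta)$ is immediate because genericity of $\beta$ forces the supports $\operatorname{supp}(\phi_{v_\sigma^{\mathbf{k}}})$ to be pairwise disjoint. Then, writing $v_\sigma^{\mathbf{k}}+(B_\sigma\mathbf{m})^t=v_\sigma^{\mathbf 0}+(B_\sigma(\mathbf{k}+\mathbf{m}))^t$ and using $wB_\sigma>0$ (equation \eqref{w-Bsigma}), one sees that $\ini_w(\phi_{v_\sigma^{\mathbf{k}}})$ exists and, by the proof of \cite[Theorem 2.5.5]{SST}, is a solution of $\ini_{(-w,w)}(H_A(\beta))$; since that solution space is spanned by the monomials $x^v$ with $v$ an exponent, some exponent must lie in $\operatorname{supp}(\phi_{v_\sigma^{\mathbf{k}}})$, and Remark \ref{remark: phiv-constant-multiple} gives the desired scalar identification. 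Independence plus the count $|\mathcal{B}_w(\beta)|=\sum_{\sigma\in T_w}\vol(\sigma)=\dim_\CC\cN_w(H_A(\beta))$ then finishes. If you want to salvage your more combinatorial route, you would need to actually prove the injectivity of the class map for standard pairs (e.g.\ via the multiplicity of $\ini_w(I_A)$ along the prime associated to $\sigma$); the paper's initial-form argument is the cheaper way around it.
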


\begin{proof} 
By the assumption on $\beta$, the difference between two vectors in the set $\{v_\sigma^{\mathbf{k}} |\; \sigma\in T_w,\;\mathbf{k}\in \Omega_\sigma\}$ is not an integer vector. Thus, the series in $\mathcal{B}_w (\beta)$ have pairwise disjoint supports, hence they are linearly independent. Moreover, $\nsupp (v_\sigma^{\mathbf{k}})=\nothing$, which implies that $\phi_{v_\sigma^{\mathbf{k}}}$ is annihilated by $H_A (\beta)$, see \cite[Proposition 3.4.13]{SST}.

On the other hand, the support of $\phi_{v_\sigma^{\mathbf{k}}}$ is the set 
$$\operatorname{supp} (\phi_{v_\sigma^{\mathbf{k}}})= \{ v_\sigma^{\mathbf{k}} + (B_\sigma \mathbf{m})^t |\; \mathbf{m}\in \ZZ^{\overline{\sigma}},\; B_\sigma \mathbf{m}\in \ZZ^n \mbox{ and } \mathbf{k}+\mathbf{m} \in \NN^{\overline{\sigma}}\}.$$
Notice that $v_\sigma^{\mathbf{k}} + (B_\sigma \mathbf{m})^t = v_\sigma^{\bf 0} + (B_\sigma ( {\mathbf{k} + \mathbf{m}}))^t$ where ${\mathbf{k} + \mathbf{m}}\in \NN^n$. Thus, since $w B_\sigma >0$ for any $\sigma \in T_w$, we have that $\Re (\langle w,v'\rangle)\geq \Re(\langle w, v_\sigma^{\bf 0}\rangle)$ for all $v'\in \operatorname{supp} (\phi_{v_\sigma^{\mathbf{k}}})$. It follows that there exists the initial form $\ini_w (\phi_{v_\sigma^{\mathbf{k}}})$ and that it consists in a finite sum of terms. By the proof of \cite[Theorem 2.5.5]{SST} we also have that $\ini_w (\phi_{v_\sigma^{\mathbf{k}}})$ is a solution of $\ini_{(-w,w)}(H_A(\beta))$, but a basis of its solutions is given by the set of monomials $x^v$ for $v$ varying in the set of exponents of $H_A(\beta)$ (see Theorems \ref{thm: generic-Nilsson-basis} and \ref{thm-DMM:generic-case}). It follows that there exists an exponent $\widetilde{v}_\sigma^{\mathbf{k}}$ of $H_A(\beta)$ with respect to $w$ such that $\widetilde{v}_\sigma^{\mathbf{k}}\in \operatorname{supp}(\phi_{v_\sigma^{\mathbf{k}}})$, hence $\phi_{v_\sigma^{\mathbf{k}}}=c_{\sigma,\mathbf{k}} \cdot \phi_{\widetilde{v}_\sigma^{\mathbf{k}}
}$ for some nonzero scalar $c_{\sigma,\mathbf{k}}\in\CC$, see Remark \ref{remark: phiv-constant-multiple}.  
This implies that $\mathcal{B}_w (\beta)$ is a basis of $\cN_w (H_A (\beta))$ in this case, by Theorem \ref{thm: generic-Nilsson-basis}.
\end{proof}

\begin{cor}\label{cor: w-constant-Nilsson} 
If $w, w'$ are weight vectors for $H_A(\beta)$, then we have the following:
\begin{enumerate}
 \item[i)] If $\beta$ is generic, $\cN_w (H_A (\beta))=\cN_{w'} (H_A (\beta))$ if and only if $T_w=T_{w'}$. 
 \item[ii)] For all $\beta\in \CC^d\setminus\varepsilon (A)$, $\cN_w (H_A (\beta))=\cN_{w'} (H_A (\beta))$ if  $\cC_w=\cC_{w'}$.
 \item[iii)] For all $\beta\in \CC^d\setminus\varepsilon (A)$, the cone $\cC$ in Definition \ref{def: formal-Nilsson} can be chosen to be $\cC_w$ for any basic Nilsson solution of $H_A(\beta)$ in the direction of $w$.
\end{enumerate}
\end{cor}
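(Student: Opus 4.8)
The plan is to prove (i) directly from the generic basis, and to obtain (ii) as a consequence of (iii), which carries the main content. For the ``if'' direction of (i), note that by Proposition \ref{prop: generic-basic-Nilsson} the basis $\mathcal{B}_w(\beta)$ is built only from the facets $\sigma\in T_w$ and the finite sets $\Omega_\sigma$; hence $T_w=T_{w'}$ gives $\mathcal{B}_w(\beta)=\mathcal{B}_{w'}(\beta)$ and therefore $\cN_w(H_A(\beta))=\cN_{w'}(H_A(\beta))$. For the converse I would argue contrapositively. Assume $T_w\neq T_{w'}$ and choose a maximal simplex $\sigma\in T_w\setminus T_{w'}$ together with some $\mathbf{k}\in\Omega_\sigma$. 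Because $\beta$ is generic the members of $\mathcal{B}_{w'}(\beta)$ have pairwise disjoint supports, so any element of $\cN_{w'}(H_A(\beta))$ whose support meets the coset $v_\sigma^{\mathbf{k}}+\ker_\ZZ(A)$ must be a scalar multiple of the single $\phi_{v_{\sigma'}^{\mathbf{k'}}}\in\mathcal{B}_{w'}(\beta)$ lying in that coset, for some $\sigma'\in T_{w'}$. The set of support directions of $\phi_{v_\sigma^{\mathbf{k}}}$ is the cone $\{u\in\ker_\RR(A):u_j\geq 0\ \forall j\in\overline\sigma\}$ generated by the columns of $B_\sigma$, and this cone determines $\sigma$; since $\sigma'\neq\sigma$ the two series cannot be proportional, so $\phi_{v_\sigma^{\mathbf{k}}}\notin\cN_{w'}(H_A(\beta))$ and the two spaces are distinct.

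For (iii) I fix $\beta\in\CC^d\setminus\varepsilon(A)$. The key observation is that $\ini_{(-w,w)}(H_A(\beta))$ is constant as $w$ varies over the open fan cone $\cC_w$, so the set of exponents of $H_A(\beta)$ with respect to $w$ depends only on $\cC_w$; in particular each exponent $v$ with respect to $w$ is an exponent with respect to every $w'\in\cC_w$. By Corollary \ref{cor: equalities-still-hold} one has $\dim_\CC\cN_w(H_A(\beta))=\rank(\ini_{(-w,w)}(H_A(\beta)))$, so the canonical series $\phi_v$ of \eqref{eqn:SST-series} attached to these exponents (together with their logarithmic generalizations, as furnished by the construction in Lemma \ref{lemma: Nilsson-lower-bound} in the style of \cite[Theorem 3.5.1]{SST}) form a basis of $\cN_w(H_A(\beta))$, and each such series depends only on $v$ and not on $w$. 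Now fix one of them and let $w'$ range over $\cC_w$. For every such $w'$ the vector $v$ is still an exponent, so applying the same basis description at $w'$ shows that $\phi_v$ is a basic Nilsson solution in the direction of $w'$ with leading term $x^v$; by Remark \ref{remark: initial-form-basic} this means $v$ is its unique minimal $w'$-weight term, forcing $\Re\langle w',u\rangle\geq 0$ for every support direction $u$ of $\phi_v$. Letting $w'$ sweep out $\cC_w$ yields $u\in\cC_w^*$, so the support of $\phi_v$ lies in $\ker_\ZZ(A)\cap\cC_w^*$. Finally, by the proof of Proposition \ref{prop: Nilsson-upper-bound} the initial-form map $\phi\mapsto\ini_w(\phi)$ is injective on $\cN_w(H_A(\beta))$, and it is surjective onto the solution space of $\ini_{(-w,w)}(H_A(\beta))$ by the dimension equality above; since an arbitrary basic Nilsson solution $\phi$ has the single initial form $\ini_w(\phi)=x^vp_0(\log(x))$, it must coincide with the canonical (logarithmic) series carrying that same initial form, and therefore inherits support in $\ker_\ZZ(A)\cap\cC_w^*$. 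This is exactly the assertion that the cone $\cC$ in Definition \ref{def: formal-Nilsson} may be taken to be $\cC_w$.

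Part (ii) then drops out. If $\cC_w=\cC_{w'}$ then $\cC_w^*=\cC_{w'}^*$, and given a basic Nilsson solution in the direction of $w$, part (iii) places its support in $\ker_\ZZ(A)\cap\cC_{w'}^*$. Choosing any strongly convex open cone $\cC\subseteq\cC_{w'}$ with $w'\in\cC$ gives $\cC^*\supseteq\cC_{w'}^*$, so the support still lies in $\ker_\ZZ(A)\cap\cC^*$ and the series satisfies every clause of Definition \ref{def: formal-Nilsson} in the direction of $w'$; thus $\cN_w(H_A(\beta))\subseteq\cN_{w'}(H_A(\beta))$, and equality follows by symmetry. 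I expect the genuine obstacle to be (iii) in the non-generic range $\beta\in\CC^d\setminus\varepsilon(A)$: once $\beta$ fails to be generic the canonical series acquire logarithmic coefficients and the supports of distinct basis elements need no longer be disjoint, so the confinement of the support to the \emph{smallest} admissible dual cone $\cC_w^*$ cannot be verified term by term and must instead be deduced, as above, from the invariance of the exponents along $\cC_w$ combined with the dimension count of Corollary \ref{cor: equalities-still-hold}.
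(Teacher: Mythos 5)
Your overall architecture inverts the paper's: the paper proves ii) first, by a local-to-global argument, and then reads off iii) as a consequence, whereas you try to prove iii) directly and deduce ii) from it. Your deduction of ii) from iii) is fine, and i) is essentially right (though your claim that the recession cone $\{u\in\ker_\RR(A)\,:\,u_j\geq 0\ \forall j\in\overline\sigma\}$ determines $\sigma$ can fail for general pairs of maximal simplices, so that step deserves more care). The genuine gap is in the central step of iii). From the constancy of $\ini_{(-w',w')}(H_A(\beta))$ on $\cC_w$ you conclude that each exponent $v$ remains an exponent for every $w'\in\cC_w$, and from this you assert that the corresponding (logarithmic) canonical series ``is a basic Nilsson solution in the direction of $w'$ with leading term $x^v$'' for every $w'\in\cC_w$. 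That assertion is precisely what has to be proved: it says that $v$ attains the minimal $w'$-weight on the support of that particular series for every $w'\in\cC_w$, i.e. that its support lies in $v+\cC_w^*$. Knowing that $x^v$ solves $\ini_{(-w',w')}(H_A(\beta))$ only tells you which monomials are candidates for initial terms at $w'$, not which one is attained by your fixed series; nor is it clear that the series produced by the SST-type construction at $w'$ coincides with the one produced at $w$, since for non-generic $\beta$ these logarithmic series are not canonically determined by the exponent alone (unlike the $\phi_v$ of Remark \ref{remark: phiv-constant-multiple}). The argument is therefore circular at this point.

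The paper closes exactly this gap by working locally first: given a basis $\phi_1,\ldots,\phi_r$ of basic Nilsson solutions, each comes with a cone $\cC_i$ as in Definition \ref{def: formal-Nilsson}; on the intersection $\cC=\cap_{i}\cC_i$ every $\phi_i$ is visibly a basic Nilsson solution in the direction of any $w'\in\cC$ with unchanged initial form, giving $\cN_w(H_A(\beta))\subseteq\cN_{w'}(H_A(\beta))$, which is an equality by the dimension count of Corollary \ref{cor: equalities-still-hold}. This makes the space, its basic Nilsson solutions and their initial forms locally constant in $w$, hence constant on the connected open cone $\cC_w$; only then does one let $w'$ sweep out all of $\cC_w$ to conclude $C_i\subseteq\cC_w^*$. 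If you want to keep your order of deduction you still need this connectedness/local-constancy step (or an explicit proof that the SST-style series already have support in the dual of the full Gr\"obner cone); the invariance of the set of exponents alone does not deliver it.
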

\begin{proof}
Proposition \ref{prop: generic-basic-Nilsson} directly implies i).  Let us prove ii).  
We can take a basis $\mathcal{B}=\{\phi_1,\ldots, \phi_r\}$ of $\cN_w (H_A(\beta))$ so that each $\phi_i=x^{v^{(i)}} \sum_{u\in C_i} p_u^{(i)}(\log(x))$ is a basic Nilsson solution of $H_A(\beta)$ in the direction of $w$. Thus, for $i=1,\ldots,r$, there exists a strongly convex open cone $\cC_i$ as in condition ii) of Definition \ref{def: formal-Nilsson}, i.e. $w\in \cC_i \subseteq \cC_w$ and $C_i\subseteq \cC_i^*\cap \ker_\ZZ (A)$. Then the strongly convex open cone $\cC:=\cap_{k=1}^r \cC_k \subseteq \cC_w$ satisfies that condition for all $i=1,\ldots,r$, since $\cC\subseteq\cC_i$ imples $\cC_i^*\subseteq \cC^*$. It follows that for all $w'\in \cC$, the series $\phi_i$ are also basic Nilsson solutions in the direction of $w'$ and, by Remark \ref{remark: initial-form-basic}, $\ini_{w'}(\phi_i)=x^{v^{(i)}} p_0^{(i)}(\log(x))$. This implies that $\cN_w (H_A (\beta))\subseteq \cN_{w'} (H_A (\beta))$, for all $w'\in \cC$. But this last inclusion must be an equality since both spaces have the same dimension, see Corollary \ref{cor: equalities-still-hold}. It follows that the space $\cN_w (H_A (\beta))$, its subset of basic Nilsson solutions $\phi_i$ and their initial forms $\ini_w (\phi_i)$ are locally constant with respect to the weight vector $w$, hence they are constant in the whole open cone $\cC_w$ of the $A$--hypergeometric fan at $\beta$. This proves ii). For iii), notice that, the fact that $\ini_{w'}(\phi_i)=x^{v^{(i)}} p_0^{(i)}(\log(x))$ for all $w'\in \cC_w$ implies that $\langle w',u\rangle \geq 0$ for all $u\in C_i$ and all $w'\in \cC_w$. Thus, $C_i\subseteq \cC_w^*$ for all $i=1,\ldots,r$, which proves the result.
\end{proof}

\begin{remark}
If $\beta$ is generic and $w,w'$ are weight vectors such that $T_w=T_{w'}$, it may happen that $\ini_w (\phi )\neq \ini_{w'}(\phi)$ for some series $\phi\in \mathcal{B}_w (\beta)=\mathcal{B}_{w'} (\beta)$ (in which case $\ini_{(-w,w)}(H_A(\beta))\neq \ini_{(-w',w')}(H_A(\beta))$). For example, let us consider the matrix 
 $$A=\left(\begin{array}{rrrr}
 1&1&1&1\\
 0&1&2&3\end{array}\right)$$ 
and a generic $\beta\in\mathbb{C}^3$. The maximal simplex $\sigma=\{1,4\}$ defines a regular triangulation of $A$ induced by any of the weight vectors $w^{(1)}=(1,2,5,1)$ and $w^{(2)}=(1,5,2,1)$. We can choose $\Omega_\sigma$ so that the series 
$\phi_v$ with $v=(\beta_1-(\beta_2+2)/3,1,0,(\beta_2-1)/3)$, see \eqref{eqn:SST-series}, belongs to $\mathcal{B}_{w^{(1)}} (\beta)=\mathcal{B}_{w^{(2)}} (\beta)$. Notice that 
$$N_v=\{u=(-(2 m_2 + m_3)/3,m_2,m_3,-(m_2+2 m_3)/3)\in \ZZ^4|\; m_2\geq -1, \; m_3 \geq 0\; \}.$$ Thus, $\ini_{w^{(1)}}(\phi_v)=x^v\neq \ini_{w^{(2)}}(\phi_v) $ since  $v+(0,-1,2,-1)\in \operatorname{supp}(\phi_v)=v+N_v$ has smaller $w^{(2)}$-weight than $v$.
\end{remark}

The following result improves \cite[Corollary 6.9 ]{DMM12}.

\begin{cor}\label{cor:DMM-improvement}
If $w$ is a weight vector, we have, for all $\beta\in\CC^d$, that 
\begin{equation}\label{eqn: convergent-Nilsson-lower-bound}
\dim_\CC (\{\phi \in \cN_w (H_A(\beta)) | \; \phi \mbox{ is convergent}\})\geq \sum_{\sigma \in T_w^0 }\vol (\sigma ) 
\end{equation}
where $T_w^0=\{\sigma \in T_w |\; \sigma \subseteq \eta \mbox{ for some } \eta\in \Gamma_A \}$. Moreover, equality holds in \eqref{eqn: convergent-Nilsson-lower-bound} if $\beta$ is generic.
\end{cor}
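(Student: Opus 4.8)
The plan is to reduce everything, via Proposition \ref{prop: generic-basic-Nilsson}, to deciding which of the basic Nilsson solutions $\phi_{v_\sigma^{\mathbf{k}}}$ are convergent, and then to transfer the resulting count to arbitrary $\beta$ by the specialization technique already used in Lemma \ref{lemma: Nilsson-lower-bound}. First I would treat generic $\beta$. By Proposition \ref{prop: generic-basic-Nilsson}, $\mathcal{B}_w(\beta)=\{\phi_{v_\sigma^{\mathbf{k}}}:\sigma\in T_w,\ \mathbf{k}\in\Omega_\sigma\}$ is a basis of $\cN_w(H_A(\beta))$, and, as recalled in the proof of that proposition, its elements have pairwise disjoint supports lying in distinct cosets of $\ZZ^n$. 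Consequently a linear combination of them is convergent if and only if every $\phi_{v_\sigma^{\mathbf{k}}}$ occurring with nonzero coefficient is convergent, so the space of convergent formal Nilsson solutions is spanned by the convergent basis elements.

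The crux is then the claim that $\phi_{v_\sigma^{\mathbf{k}}}$ is convergent exactly when $\sigma\in T_w^0$, i.e. when $\sigma\subseteq\eta$ for some $\eta\in\Gamma_A$. For the ``if'' direction I would show that such a $\sigma$ is a facet of a regular triangulation refining $\Gamma_A$: writing $c^t=\mathbf{1}^t A_\sigma^{-1}$, the hypothesis $\sigma\subseteq\eta$ means $\langle c,a_j\rangle=1$ for $j\in\sigma$ and $\langle c,a_j\rangle\le 1$ for all $j$, with equality exactly when $a_j$ lies on the facet $\eta$; hence the $\overline{\sigma}$-entries of $\mathbf{1}B_\sigma$ are $1-\langle c,a_j\rangle\ge 0$, vanishing precisely on $\eta\setminus\sigma$. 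Taking $w'=\mathbf{1}+\epsilon w$ with $\epsilon>0$ small, one checks from \eqref{w-Bsigma} that $w'B_\sigma>0$ (using $wB_\sigma>0$ on the coordinates where $\mathbf{1}B_\sigma$ vanishes), so $\sigma\in T_{w'}$, while $w'$ is a perturbation of $(1,\dots,1)$ and thus $T_{w'}$ refines $\Gamma_A$. Theorem \ref{Fer-10-theorem} with $\tau=\{1,\dots,n\}$ and $T(\tau)=T_{w'}$ then exhibits $\phi_{v_\sigma^{\mathbf{k}}}$ as a member of a basis of holomorphic, hence convergent, solutions. For the ``only if'' direction, when $\sigma$ lies on no facet of $\Gamma_A$ the series $\phi_{v_\sigma^{\mathbf{k}}}$ is Gevrey of order strictly larger than one along $\CC^n$, hence divergent; this is part of the Gevrey-order computations of \cite{Fer10}, which I would simply cite. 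Since $|\Omega_\sigma|=\vol(\sigma)$, this yields $\dim_\CC(\{\phi\in\cN_w(H_A(\beta)):\phi\text{ convergent}\})=\sum_{\sigma\in T_w^0}\vol(\sigma)$ for generic $\beta$, which is the asserted equality.

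To pass to arbitrary $\beta$ I would mimic the argument of Lemma \ref{lemma: Nilsson-lower-bound}, that is, the specialization procedure in the proof of \cite[Theorem 3.5.1]{SST}, but applied only to the subfamily $\{\phi_{v_\sigma^{\mathbf{k}}}:\sigma\in T_w^0,\ \mathbf{k}\in\Omega_\sigma\}$ of convergent generators. For $\beta$ near a special value $\beta_0$ these series are holomorphic on a common open set, their coefficients are rational in $\beta$ with controlled growth, and resolving the coincidences among their exponents as $\beta\to\beta_0$ (taking suitable $\beta$-derivatives to clear the zeros of denominators) produces $\sum_{\sigma\in T_w^0}\vol(\sigma)$ linearly independent Nilsson solutions at $\beta_0$. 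The hard part will be to guarantee that this limiting process keeps the solutions convergent: I would argue that the domain of convergence attached to $w$ can be chosen uniformly for $\beta$ in a neighborhood of $\beta_0$, so that a locally uniform limit of the holomorphic specializations is again holomorphic there, and that confining the procedure to the $T_w^0$-family avoids mixing with the genuinely divergent generators indexed by $T_w\setminus T_w^0$. Establishing this uniformity — essentially that the estimates underlying the convergence in Theorem \ref{Fer-10-theorem} are local in $\beta$ — is the main technical obstacle; granting it, \eqref{eqn: convergent-Nilsson-lower-bound} follows for all $\beta$, and together with the generic computation this completes the proof.
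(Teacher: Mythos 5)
Your proposal is correct and follows essentially the same route as the paper: the basis $\mathcal{B}_w(\beta)$ from Proposition \ref{prop: generic-basic-Nilsson}, the criterion that $\phi_{v_\sigma^{\mathbf{k}}}$ converges exactly when $\sigma$ lies in a facet of $\Gamma_A$ (the paper cites \cite[Theorem 3.11]{Fer10} directly for this, via the condition $(1,\ldots,1)B_\sigma\geq 0$, where you rederive the ``if'' direction through Theorem \ref{Fer-10-theorem}), the pairwise-disjoint-supports argument for equality in the generic case, and the specialization method of \cite[Theorem 3.5.1]{SST} applied to the convergent subfamily for arbitrary $\beta$. The uniformity-in-$\beta$ issue you flag in the last step is real but is handled no more explicitly in the paper, which simply invokes the same method.
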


\begin{proof}
Assume first that $\beta\in\CC^d$ is generic. Thus, $\mathcal{B}_w (\beta)$ is a basis of $\cN_w (H_A(\beta))$ by Proposition \ref{prop: generic-basic-Nilsson}. On the other hand, it follows from \cite[Theorem 3.11]{Fer10} and the definition of $B_\sigma$, that $\phi_{v_\sigma^{\mathbf{k}}}$ is convergent if and only if 
$(1,\ldots, 1)B_\sigma \geq 0$, that 
holds if and only if $\sigma$ is contained in a facet of $\Gamma_A$. Thus, $\mathcal{B}_w^0(\beta) :=\{\phi_{v_\sigma^{\mathbf{k}}}: \; \sigma \in T_w^0 , \; \mathbf{k} \in \Omega_{\sigma}\}$ is a linearly independent set of convergent formal Nilsson solutions of $H_A(\beta)$ in the direction of $w$. The equality in \eqref{eqn: convergent-Nilsson-lower-bound} for generic $\beta$ follows from the fact that the series in $\mathcal{B}_w (\beta)\setminus \mathcal{B}^0_w(\beta)$ are all divergent and have pairwise disjoint supports, so no linear combination of them can be convergent. If $\beta\in \CC^d$ is not assumed to be generic, we can consider a generic parameter $\beta'$ and apply to the set $\mathcal{B}_w^0 (\beta + \epsilon \beta')$, with $\epsilon \in \CC$ such that $|\epsilon|$ is small enough, the same method as in \cite[Theorem 3.5.1]{SST} to get the desired lower bound in \eqref{eqn: Nilsson-lower-bound}.
\end{proof}

\begin{lemma}{\rm \cite[Lemma 5.3]{Saito-log-free}}\label{lemma: diff-log-term}
Let $p(y)\in \CC [y]$, $v\in\CC^n$ and $\nu\in\NN^n$, then $$\partial^\nu [x^v p (\log(x))]=x^{v-\nu} \sum_{0\leq \nu'\leq \nu} \lambda_{\nu'} \partial^{\nu-\nu'}[p](\log (x))$$ where the sum is over $\nu'\in \NN^n$ such that $\nu_j'\leq \nu_j$ for all $j$, and $\lambda_{\nu'}\in\CC$. In particular, $$\lambda_\nu=[v]_\nu=\prod_{j=1}^nv_j(v_j-1)\cdots (v_j-\nu_j+1).$$
\end{lemma}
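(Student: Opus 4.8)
The plan is to establish the identity one variable at a time, using the Euler operators $\theta_j := x_j\partial_j$ together with the elementary observation that on functions of the form $x^v p(\log(x))$ each $\theta_j$ acts as a first order operator in the logarithmic variables while leaving the exponent $v$ untouched. The advantage of working with $\theta_j$ rather than $\partial_j$ directly is that $\theta_j$ does not shift the exponent, so the falling-factorial structure stays clean.

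First I would record the key single-variable computation. Writing $y_j=\log(x_j)$ and applying the product rule together with $\partial_j(\log(x_j))=x_j^{-1}$, a direct calculation gives
\[
\theta_j[x^v p(\log(x))] = x^v\bigl[(v_j + \partial_{y_j})p\bigr](\log(x)),
\]
so that $\theta_j$ acts on the polynomial factor as the first order operator $v_j+\partial_{y_j}$ and fixes $x^v$; consequently $\theta_j-m$ acts as $(v_j-m)+\partial_{y_j}$. Combining this with the standard Weyl-algebra identity $x_j^{\nu_j}\partial_j^{\nu_j}=\theta_j(\theta_j-1)\cdots(\theta_j-\nu_j+1)$, which follows by an easy induction from $\partial_j x_j=x_j\partial_j+1$, and clearing the factor $x_j^{\nu_j}$, I obtain
\[
\partial_j^{\nu_j}[x^v p(\log(x))] = x^{v-\nu_j e_j}\Bigl[\prod_{k=0}^{\nu_j-1}(v_j - k + \partial_{y_j})\,p\Bigr](\log(x)),
\]
where $e_j$ is the $j$-th standard basis vector.

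Next I would assemble the full operator $\partial^\nu=\prod_{j=1}^n\partial_j^{\nu_j}$. Since the $\partial_j$ commute and the action in the variable $x_j$ shifts only the $j$-th coordinate of the exponent and involves only $\partial_{y_j}$, applying the previous formula variable by variable yields
\[
\partial^\nu[x^v p(\log(x))] = x^{v-\nu}\Bigl[\prod_{j=1}^n\prod_{k=0}^{\nu_j-1}(v_j - k + \partial_{y_j})\,p\Bigr](\log(x)).
\]
The operator $P:=\prod_{j}\prod_{k}(v_j-k+\partial_{y_j})$ is a polynomial in $\partial_{y_1},\ldots,\partial_{y_n}$ of degree exactly $\nu_j$ in each $\partial_{y_j}$; writing its expansion as $P=\sum_{0\leq\nu'\leq\nu}\lambda_{\nu'}\,\partial_{y}^{\nu-\nu'}$ defines the scalars $\lambda_{\nu'}\in\CC$ and reproduces the claimed formula, since $\partial_y^{\nu-\nu'}$ applied to $p$ and re-evaluated at $\log(x)$ is exactly $\partial^{\nu-\nu'}[p](\log(x))$.

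Finally, to identify $\lambda_\nu$ I would read off the constant term (the coefficient of $\partial_y^0$) of $P$: for each $j$ the constant term of $\prod_{k=0}^{\nu_j-1}(v_j-k+\partial_{y_j})$ is $v_j(v_j-1)\cdots(v_j-\nu_j+1)=[v_j]_{\nu_j}$, so the constant term of $P$ is $\prod_{j=1}^n[v_j]_{\nu_j}=[v]_\nu$, which is precisely $\lambda_\nu$. There is no genuine obstacle here, as the whole statement is an operator identity; the only points requiring care are the verification of the single-variable action of $\theta_j$ and the bookkeeping needed to match the expansion of $P$ with the indexing $\sum_{\nu'\leq\nu}\lambda_{\nu'}\partial^{\nu-\nu'}$ appearing in the claim. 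An essentially equivalent alternative would be a direct induction on $|\nu|$ using only the product rule, at the cost of tracking the exponent shift by hand.
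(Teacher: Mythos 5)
The paper does not prove this lemma; it is imported verbatim from Saito's work (the citation \cite[Lemma 5.3]{Saito-log-free}), so there is no in-paper argument to compare yours against. Your proof is correct and complete: the single-variable computation $\theta_j[x^v p(\log(x))]=x^v[(v_j+\partial_{y_j})p](\log(x))$ is right, the identity $x_j^{\nu_j}\partial_j^{\nu_j}=\theta_j(\theta_j-1)\cdots(\theta_j-\nu_j+1)$ is the standard one, the passage to several variables is justified by commutativity and the fact that each factor touches only the $j$-th exponent and only $\partial_{y_j}$, and reading off $\lambda_\nu$ as the constant term of the operator $\prod_j\prod_{k=0}^{\nu_j-1}(v_j-k+\partial_{y_j})$ correctly yields $[v]_\nu$. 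This is a clean, self-contained derivation of exactly the statement the paper uses (in the proofs of Lemma \ref{lemma: deg-p-bound} and Theorem \ref{thm: Gevrey-equal-Nilsson}), and in particular it makes transparent the fact exploited there that $\lambda_\nu=[v]_\nu\neq 0$ precisely when no $v_j$ lies in $\{0,1,\ldots,\nu_j-1\}$.
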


The following lemma guarantees that in the third condition of Definition \ref{def: formal-Nilsson} we can assume that the constant $K$ is independent of $\beta$.

\begin{lemma}\label{lemma: deg-p-bound}
Let $w$ be a weight vector. Then for any basic Nilsson solution $\phi$ of $H_A(\beta)$ in the direction of $w$ as in \eqref{eqn: basic-Nilsson} we have that 
$$\deg (p_u)\leq (n+1)(2^{2(d+1)}\vol(A)-1)$$ for all $u\in C$.
\end{lemma}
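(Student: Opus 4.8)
The plan is to bound $\deg(p_u)$ uniformly by relating the logarithmic degrees of a basic Nilsson solution to the rank of the initial ideal $\ini_{(-w,w)}(H_A(\beta))$, which in turn is controlled by the normalized volume $\vol(A)$ via known results. First I would recall that the polynomial coefficients $p_u$ in a basic Nilsson solution appear because $x^v$ may be a root of $\ini_{(-w,w)}(H_A(\beta))$ with logarithms, and the maximal degree $K$ of the $p_u$ is governed by the length of the longest chain of logarithmic solutions sharing the same exponent $v$, equivalently by the size of the largest Jordan-type block in the action on the space of solutions of the initial ideal. The key point is that this length is bounded by the holonomic rank of $\ini_{(-w,w)}(H_A(\beta))$, since all such logarithmic solutions are linearly independent elements of that solution space.

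The central step is therefore to produce an explicit upper bound for $\rank(\ini_{(-w,w)}(H_A(\beta)))$ that is uniform in $\beta$. By Theorem \ref{thm-DMM:generic-case}, for generic $\beta$ this rank equals $\sum_{\sigma\in T_w}\vol(\sigma)\le\vol(A)$ (the facets of any triangulation of $A$ partition its volume), and by Corollary \ref{cor: equalities-still-hold} the same holds for all $\beta\notin\varepsilon(A)$. For arbitrary $\beta$, the rank can jump, but only by a controlled amount: I would invoke the semicontinuity and the explicit bounds on rank of $A$-hypergeometric systems and their initial ideals coming from \cite{MMW} and \cite{slopes}. The explicit constant $(n+1)(2^{2(d+1)}\vol(A)-1)$ strongly suggests that one uses a bound of the form $\rank(M_A(\beta))\le 2^{2(d+1)}\vol(A)$ valid for every $\beta$ (an Adolphson–type or Matusevich–Miller–Walther upper bound on the possibly-jumping rank), and then multiplies by a factor of $(n+1)$ coming from the number of variables, i.e. the maximal number of distinct log-degrees that can be forced in each of the $n$ directions plus the constant direction.

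Concretely, the steps in order would be: (1) fix a basic Nilsson solution $\phi=x^v\sum_{u\in C}x^u p_u(\log x)$ and let $K=\max_u\deg(p_u)$; (2) using Lemma \ref{lemma: diff-log-term} and condition (i) of Definition \ref{def: formal-Nilsson}, extract from $\phi$ a collection of $x^{v+u_0}q(\log x)$ with $\deg q=K$ that are solutions of $\ini_{(-w,w)}(H_A(\beta))$, and show these, together with their iterated derivatives in the $\log$-variables, span a space of dimension at least $K+1$ inside the solution space of the initial ideal sharing a fixed exponent; (3) conclude $K+1\le\rank(\ini_{(-w,w)}(H_A(\beta)))$; (4) bound the latter rank by $2^{2(d+1)}\vol(A)$ uniformly in $\beta$, and sharpen by the $(n+1)$ factor by treating each coordinate of the multidegree of $p_u$ separately, since a genuine degree in $n+1$ \emph{independent} $\log$-directions can accumulate. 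The main obstacle I anticipate is step (4): obtaining a $\beta$-independent upper bound on $\rank(\ini_{(-w,w)}(H_A(\beta)))$ for \emph{all} parameters, including rank-jumping ones. Here the worst case is not controlled by $\vol(A)$ alone, and I would need the explicit exponential estimate $2^{2(d+1)}\vol(A)$ for the rank of a (possibly non-regular, possibly rank-jumping) $A$-hypergeometric system, which should follow from the general bounds on holonomic rank in terms of the degree of the characteristic variety together with a combinatorial estimate on the number of exponents modulo $\ker_\ZZ(A)$ that can coincide.
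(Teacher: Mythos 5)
There is a genuine gap, and it sits in a different place from where you anticipate it. Your step (4) — a $\beta$-independent bound on the rank — is actually the easy part: \cite[Corollary 4.1.2]{SST} already gives $\rank(H_A(\beta))\leq 2^{2d}\vol(A)$ for \emph{all} $\beta$, rank-jumping or not. The real obstruction is your step (2). For a basic Nilsson solution $\phi=x^v\sum_{u\in C}x^up_u(\log x)$ of an \emph{irregular} system (i.e.\ $I_A$ not homogeneous), only the initial term $x^vp_0(\log x)$ is known to be a solution of $\ini_{(-w,w)}(H_A(\beta))$; the term $x^{v+u_0}q(\log x)$ realizing the maximal degree $K$ need not be an initial form of anything, and there is no mechanism in the irregular setting for turning it into a solution of the initial ideal or into a chain of $K+1$ independent logarithmic solutions. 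The degree bound for \emph{all} the $p_u$ (not just $p_0$) in terms of the rank is \cite[Theorem 2.5.14]{SST}, and that theorem is proved only for regular holonomic ideals; it cannot be invoked directly here.

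The paper's route, which your plan is missing entirely, is a reduction to the regular case by homogenization: one passes to the $(d+1)\times(n+1)$ matrix $\rho(A)$ (extra row of ones, extra column of zeros), for which $I_{\rho(A)}$ is homogeneous and $\vol(\rho(A))=\vol(A)$, and uses the map $\phi\mapsto\rho(\phi)$ of \cite[Definition 3.16, Proposition 3.17]{DMM12} sending basic Nilsson solutions of $H_A(\beta)$ in direction $w$ to basic Nilsson solutions of $H_{\rho(A)}(\beta_0,\beta)$ in direction $(0,w)$ for generic $\beta_0$. The technical work is checking, via Lemma \ref{lemma: diff-log-term}, that this map does not decrease the degrees of the log-polynomials (this is where genericity of $\beta_0$ is used, to ensure the leading factorial coefficient $[\beta_0-|v|]_{|u|}$ does not vanish), after which \cite[Theorem 2.5.14 and Corollary 4.1.2]{SST} applied to $\rho(A)$ give the bound. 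This also explains the constants: $(n+1)$ and $2^{2(d+1)}$ are just $n$ and $2^{2d}$ for the enlarged matrix $\rho(A)$, not, as you guess, a count of independent $\log$-directions plus a constant direction, nor a worst-case rank-jump estimate. Without the homogenization step your argument does not close in the non-homogeneous case, which is exactly the case the lemma is needed for.
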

\begin{proof}
Assume first that $I_A$ is homogeneous. In this case $M_{A_\tau}(\beta-A_{\overline{\tau}}\alpha)$ is regular holonomic \cite{hotta} and $\deg (p_u)\leq n (2^{2 d}\vol(A)-1)$ by \cite[Theorem 2.5.14 and Corollary 4.1.2]{SST}.

If $I_A$ is not homogeneous, we can consider the $(d+1)\times (n+1)$ matrix $\rho(A)$ as defined at the beginning of Subsection \ref{subsection-Nilsson}. 
Notice that $\phi$ is also a basic Nilsson solution of $H_A(\beta)$ in the direction of $w'$ for all $w'\in \mathcal{C}$, where $\cC$ is an open cone as in condition ii) in Definition \ref{def: formal-Nilsson}. In particular, we can assume without loss of genericity that $w$ is generic. This implies that $(0,w)+\lambda (1,\ldots ,1)\in \RR^{n+1}$ is generic if $\lambda>0$ is generic. Since $(1,\ldots,1)$ belongs to the rowspan of $\rho(A)$, we have that $(0,w)$ is a weight vector for $H_{\rho(A)}(\beta_0,\beta)$, where $\beta_0\in\CC$ (see also  \cite[Remark 2.5]{DMM12}). 

Assume that $\beta_0\in\CC$ is sufficiently generic and consider the following series, see \cite[Definition 3.16]{DMM12},
$$\rho(\phi):=\sum_{u\in C}\partial_0^{|u|}[x_0^{\beta_0-|v|}x^{v+u}\widehat{p}_u(\log (x_0),\ldots, \log(x_n))]$$ where $|u|:=\sum_{j=1}^n u_j$, $\partial_0^{-k}$ is defined in \cite[Definition 3.13]{DMM12} when $k>0$, and $\widehat{p}_u\in \CC [y_0,\ldots,y_n]$ is defined from $p_u$ as in \cite[(3.2)]{DMM12}. We remark here that $\deg(p_u)\leq \deg(\widehat{p}_u)$ because $\widehat{p}_u(0,y_1,\ldots, y_n)=p_u(y_1,\ldots,y_n)$. 

By Lemma \ref{lemma: diff-log-term} and \cite[Lemma 3.12 and Definition 3.13]{DMM12}, we can write
$$\rho(\phi)=\sum_{u\in C}x_0^{\beta_0-|v|-|u|}x^{v+u} h_u(\log (x_0),\ldots, \log(x_n))$$ where $h_u(y)\in \CC[y_0,\ldots,y_n]$. 
By Lemma \ref{lemma: diff-log-term}, if $|u|\geq 0$, then $h_u$ equals $[\beta_0-|v|]_{|u|} \cdot \widehat{p}_u$ plus other polynomial of degree smaller than $\deg(\widehat{p}_u)$. This implies that $\deg(h_u)=\deg(\widehat{p}_u)$ when $|u|\geq 0$ because $\beta_0$ is generic. For $|u|<0$ it is also true that $\deg(h_u)=\deg(\widehat{p}_u)$ by \cite[Lemma 3.12 and Definition 3.13]{DMM12}.

On the other hand, by \cite[Proposition 3.17]{DMM12} the series $\rho(\phi)$ is a basic Nilsson solution in the direction of $(0,w)$ of the hypergeometric system $H_{\rho(A)}(\beta_0,\beta)$ and since $I_{\rho(A)}$ is homogeneous, we have that $\deg(h_u)\leq (n+1)(2^{2(d+1)}\vol(\rho(A))-1)$, where $\vol(\rho(A))=\vol(A)$. Thus, $$\deg(p_u)\leq \deg(\widehat{p}_u)=\deg(h_u) \leq (n+1)(2^{2(d+1)}\vol(A)-1).$$
\end{proof}

\section{Gevrey versus formal Nilsson solutions of $H_A(\beta)$.}\label{section-Gevrey-versus-Nilsson}

Let $\tau\subseteq\{1,\ldots, n\}$ be a subset such that $A_\tau$ is pointed and $\rank(A_\tau)=d$. In this section we prove, for all $\beta\in \CC^d$, that the space of Gevrey solutions of $M_A(\beta)$ along a coordinate subspace $Y_\tau$ is contained in the space of formal Nilsson solutions of $H_A(\beta)$ in a certain direction, see Theorem \ref{thm: Gevrey-subset-Nilsson}. If we further assume that $\pos(A)=\pos(A_\tau)$, we also prove that both spaces are the same and, for $\beta\in\CC^d\setminus\varepsilon(A)$, its corresponding dimension is $\vol(\tau)$, see Theorem \ref{thm: Gevrey-equal-Nilsson}.

The following result follows from \cite[Lemma 3.6]{DMM12} (see also \cite[Lemma 4.1.3]{SST}), \cite[(3.2)]{SVW-bounds} (see also \cite[(3.13)]{SST}) and \cite[Corollary 8.4]{Sturm}.

\begin{lemma}\label{lemma: exponent-positive-coordinates}
If $w$ is a weight vector and $v$ is an exponent of $H_A(\beta)$ with respect to $w$, there exists $\sigma\in T_w$ such that $v_j\in\NN$ for all $j\notin \sigma$.
\end{lemma}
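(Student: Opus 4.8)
The plan is to reduce the statement to a combinatorial fact about the regular triangulation $T_w$, using the Stanley--Reisner description of the radical of $\ini_w(I_A)$.

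First I would record that, since $v$ is an exponent of $H_A(\beta)$ with respect to $w$, the monomial $x^v$ is a solution of $\ini_{(-w,w)}(H_A(\beta))$. Because $I_A\subseteq H_A(\beta)$ and the toric operators involve only the $\partial_j$, the $(-w,w)$--initial form of a toric operator coincides with its $w$--initial form; hence $\ini_w(I_A)\subseteq \ini_{(-w,w)}(H_A(\beta))$ (this is the content of \cite[(3.2)]{SVW-bounds}, see also \cite[(3.13)]{SST}). Consequently every operator in $\ini_w(I_A)$ annihilates $x^v$. Since $w$ is a weight vector, it lies in the interior of a full dimensional cone of the $A$--hypergeometric fan, which refines both the Gr\"obner fan of $I_A$ and the secondary fan of $A$ (Remark \ref{rem:hyper-refine-secondary}); hence $\ini_w(I_A)$ is a monomial ideal and $T_w$ is a regular triangulation of $A$.

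Next I would invoke the annihilation criterion for monomials. By Lemma \ref{lemma: diff-log-term} applied with $p\equiv 1$ one has $\partial^m x^v=[v]_m\, x^{v-m}$, so a monomial $\partial^m$ annihilates $x^v$ if and only if $[v]_m=0$, i.e. if and only if there is an index $j$ with $v_j\in\{0,1,\ldots,m_j-1\}$ (compare \cite[Lemma 3.6]{DMM12} and \cite[Lemma 4.1.3]{SST}). In particular, if $\partial^m\in\ini_w(I_A)$, then its support $\{j:\,m_j>0\}$ must contain an index $j$ with $v_j\in\NN$.

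The key step is the combinatorial claim that the set $J^{c}:=\{j:\,v_j\notin\NN\}$ is a face of the simplicial complex $T_w$; the main obstacle lies precisely here, in passing from the radical $\sqrt{\ini_w(I_A)}$ back to honest monomials of $\ini_w(I_A)$. I would argue by contradiction: if $J^{c}$ were not a face, it would contain a minimal non-face $G$ of $T_w$. By \cite[Corollary 8.4]{Sturm} the radical $\sqrt{\ini_w(I_A)}$ equals the Stanley--Reisner ideal of $T_w$, so $\prod_{j\in G}\partial_j\in\sqrt{\ini_w(I_A)}$, and therefore some power $\partial^{m}$ with $\operatorname{supp}(m)=G\subseteq J^{c}$ lies in $\ini_w(I_A)$. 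This $\partial^{m}$ must annihilate $x^v$; but every $j\in\operatorname{supp}(m)$ satisfies $v_j\notin\NN$, so no factor of $[v]_m$ vanishes and $[v]_m\neq 0$, contradicting the previous paragraph. Hence $J^{c}$ is a face of $T_w$, and since every face is contained in a maximal face, there is a facet $\sigma\in T_w$ with $J^{c}\subseteq\sigma$; equivalently, $v_j\in\NN$ for all $j\notin\sigma$, as desired.
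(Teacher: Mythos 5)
Your argument is correct, and it takes a somewhat different route from the one the paper intends. The paper does not write out a proof but derives the lemma from the standard-pair machinery: by \cite[Lemma 3.6]{DMM12} (and \cite[Lemma 4.1.3]{SST}) every exponent is a \emph{fake exponent}, i.e.\ it arises from a top-dimensional standard pair $(\partial^a,\sigma)$ of the monomial ideal $\ini_w(I_A)$ via the decomposition in \cite[(3.2)]{SVW-bounds}, so that $v_j=a_j\in\NN$ for $j\notin\sigma$; then \cite[Corollary 8.4]{Sturm} identifies the $\sigma$'s occurring in such standard pairs with the maximal simplices of $T_w$. You bypass standard pairs entirely: you keep only the containment $\ini_w(I_A)\subseteq\ini_{(-w,w)}(H_A(\beta))$ and the identification $\sqrt{\ini_w(I_A)}=I_{T_w}$, and show directly that $\{j:\,v_j\notin\NN\}$ is a face of $T_w$ by testing $x^v$ against a suitable power of the squarefree monomial of a minimal non-face, using the elementary formula $\partial^m x^v=[v]_m\,x^{v-m}$. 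This is more self-contained and isolates exactly the combinatorial input needed for the statement, at the cost of not recovering the finer information (the explicit form of $v$ attached to a standard pair, minimal negative support, the count of exponents) that the standard-pair description provides and that is used elsewhere in the literature. Two small remarks: the containment $\ini_w(I_A)\subseteq\ini_{(-w,w)}(H_A(\beta))$ is the content of \cite[(3.13)]{SST} (the ``fake initial ideal''), not of \cite[(3.2)]{SVW-bounds}, which is the standard-pair decomposition; and your passage from ``$J^c$ is a face'' to ``$J^c$ is contained in a facet $\sigma\in T_w$'' implicitly uses that $w\in\RR^n_{>0}$, so that $T_w$ is a genuine regular triangulation covering $\pos(A)$ and every face lies in a rank-$d$ simplex — this holds because weight vectors are by definition in $\RR^n_{>0}$, but it is worth saying.
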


\begin{lemma}\label{lemma-triangulations}
For any regular triangulation $T(\tau)$ of $A_\tau$ there exists a regular triangulation $T$ of $A$ such that $T(\tau)\subseteq T$. In particular, if $\pos (A)=\pos (A_\tau)$ then  $T(\tau)=T$.
\end{lemma}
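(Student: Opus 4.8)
The plan is to realize $T(\tau)$ as the set of simplices of a regular triangulation of $A$ induced by a single weight vector, obtained by starting from a weight that induces $T(\tau)$ on $A_\tau$ and lifting the columns indexed by $\overline\tau$ very high. To begin, I would fix a weight vector $w^\tau\in\RR^\tau$ with $T(\tau)=T_{w^\tau}$. Translating a weight by a vector in the rowspan leaves every regular subdivision unchanged, since in \eqref{equality-subdivision}--\eqref{inequality-subdivision} such a translation merely shifts the auxiliary vector $\mathbf{c}$; and as $A_\tau$ is pointed its rowspan meets $\RR^\tau_{>0}$, so after adding a large multiple of a strictly positive rowspan vector I may assume $w^\tau\in\RR^\tau_{>0}$ without changing $T(\tau)$.

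Next I would extend $w^\tau$ to a vector $w=(w^\tau,w^{\overline\tau})\in\RR^n$ by choosing each coordinate $w_j$, $j\in\overline\tau$, to be a large positive number. Every facet $\sigma\in T(\tau)$ is a maximal simplex of $A$, because $\rank(A_\tau)=d$ forces $|\sigma|=d$ with $A_\sigma$ invertible; let $\mathbf{c}_\sigma\in\RR^d$ be the unique vector with $\langle\mathbf{c}_\sigma,a_i\rangle=w_i$ for $i\in\sigma$. From the definition of $B_\sigma$ one computes that the $j$-th coordinate of $wB_\sigma$ equals $w_j-\langle\mathbf{c}_\sigma,a_j\rangle$. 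For $j\in\tau\setminus\sigma$ this is positive because $\sigma\in T_{w^\tau}$, and for $j\in\overline\tau$ it becomes positive once $w_j$ exceeds the finite bound $\max_{\sigma\in T(\tau)}\langle\mathbf{c}_\sigma,a_j\rangle$. Hence $wB_\sigma>0$, so $\sigma\in T_w$ by \eqref{w-Bsigma}, for every $\sigma\in T(\tau)$. Since $w\in\RR^n_{>0}$, the complex $T_w$ covers $\pos(A)$ and is therefore a genuine regular subdivision of $A$.

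It remains to make $T_w$ a triangulation while preserving the containment. The inequalities $wB_\sigma>0$, $\sigma\in T(\tau)$, are finitely many strict conditions, hence hold on an open neighborhood of $w$; and a weight induces a triangulation exactly when it lies in the interior of a full-dimensional cone of the secondary fan of $A$, the union of these interiors being dense in $\RR^n_{>0}$. Perturbing $w$ slightly inside $\RR^n_{>0}$ I would obtain a vector $w'$ lying in such an interior and still satisfying $w'B_\sigma>0$ for all $\sigma\in T(\tau)$. Then $T:=T_{w'}$ is a regular triangulation of $A$ and, by \eqref{w-Bsigma}, $T(\tau)\subseteq T$. The main obstacle will be this last step: I must verify that the perturbation can be kept inside $\RR^n_{>0}$ (so that $T_{w'}$ remains a subdivision of $\pos(A)$ even though $A$ itself need not be pointed) without destroying any of the open inequalities that encode the containment.

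For the final assertion, suppose $\pos(A)=\pos(A_\tau)$. Then the simplices of $T(\tau)$ already triangulate all of $\pos(A)$. Since the facets of the triangulation $T$ have pairwise disjoint relative interiors and together cover $\pos(A)$, a facet $\sigma'\in T\setminus T(\tau)$ would have its $d$-dimensional relative interior covered by the facets of $T(\tau)\subseteq T$, producing a point lying in the relative interiors of two distinct facets of $T$, a contradiction. Therefore $T=T(\tau)$.
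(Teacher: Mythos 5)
Your proof is correct and follows essentially the same route as the paper's: up to rescaling, lifting the $\overline\tau$-columns very high is the paper's choice $w=(\epsilon w(\tau),w(\overline\tau))$ with $\epsilon$ small, and your computation $(wB_\sigma)_j=w_j-\langle\mathbf{c}_\sigma,a_j\rangle$ together with \eqref{w-Bsigma} is exactly the verification the paper leaves as ``easy to check.'' The extra care you take (normalizing $w^\tau$ to be positive via the rowspan of the pointed matrix $A_\tau$, perturbing within the open set where all $w'B_\sigma>0$ to land in an open full-dimensional secondary cone, and the disjoint-relative-interiors argument for the final claim) is sound and fills in details the paper omits.
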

\begin{proof}
By definition of regular triangulation, there is a weight vector $w(\tau) \in \RR^\tau$ such that $T(\tau)=T_{w(\tau)}$. Then choose another generic vector $w(\overline{\tau})\in \RR_{>0}^{\overline{\tau}}$, and consider $w\in\RR^n$ to be a vector with $\tau$--coordinates $\epsilon w(\tau)$, with $\epsilon >0$ small enough, and $\overline{\tau}$--coordinates $w(\overline{\tau})$. Since $\epsilon w(\tau)$, $w(\overline{\tau})$ and $\epsilon>0$ can be chosen to be generic, it follows that $w$ induces a regular triangulation $T:=T_w$ of $A$. By using the definition of regular triangulation, see conditions \eqref{equality-subdivision} and \eqref{inequality-subdivision} (but substitute $\tau$ there by a maximal simplex $\sigma$), it is easy to check that $T(\tau)\subseteq T$ if $\epsilon >0$ is small enough. 
\end{proof}

Let $T(\tau)$ be a regular triangulation of $A_\tau$ refining $\Gamma_{A_\tau}$ and $w$ a weight vector for $H_A(\beta)$ chosen as in the proof of Lemma \ref{lemma-triangulations}. Thus, by the assumption on $T(\tau)$ we have that $w(\tau)$ is a perturbation of $(1,\ldots,1)\in \RR^\tau$. Recall that $w$, $w(\tau)$ and $w(\overline{\tau})$ are all chosen to be generic.

\begin{theorem}\label{thm: Gevrey-subset-Nilsson}
If $A_\tau$ is pointed and $\rank(A_\tau)=d$, any Gevrey solution of $M_A(\beta)$ along $Y_\tau$ (at $p\in \mathcal{U}_{ w (\tau)}$) can be written as a formal Nilsson solution of $H_A(\beta)$ in the direction of $w$.
\end{theorem}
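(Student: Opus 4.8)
The plan is to take a basis of the Gevrey solution space coming from Theorem \ref{Fer-10-theorem} and show that each basis element is a basic Nilsson solution of $H_A(\beta)$ in the direction of $w$. By Theorem \ref{Fer-10-theorem}, since $T(\tau)$ refines $\Gamma_{A_\tau}$, the set $\{\phi_{v_\sigma^{\mathbf{k}}} : \sigma\in T(\tau),\ \mathbf{k}\in\Omega_\sigma\}$ is a basis of $Hom_D(M_A(\beta),\cO_{\widehat{X|Y_\tau},p})$ for $\beta$ generic and $p\in\cW_{T(\tau)}$; the honest difficulty is that $\beta$ is \emph{not} assumed generic here, so I cannot simply invoke that theorem. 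Thus the first task is to produce, for every $\beta\in\CC^d$, a spanning set of the Gevrey space consisting of series of the form $\phi_v$ (or logarithmic refinements thereof) whose exponent vectors $v$ have $\sigma$-coordinates from $A_\sigma^{-1}(\beta - A_{\overline\sigma}\mathbf{k})$ for some $\sigma\in T(\tau)$.

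\emph{Next} I would verify conditions i)--iv) of Definition \ref{def: formal-Nilsson} for each such series. Condition i) (annihilation by the toric operators in $H_A(\beta)$) is automatic: $\phi_v$ is annihilated by $H_A(\beta)$ exactly when $v$ has minimal negative support, and this holds for the $\Gamma$-series built from the triangulation. For condition ii), the key observation is that $w$ was chosen in Lemma \ref{lemma-triangulations} with large positive $\overline\tau$-coordinates $w(\overline\tau)$ and small $\tau$-coordinates $\epsilon w(\tau)$, with $\sigma\in T(\tau)\subseteq T_w$. A Gevrey series along $Y_\tau$ involves only nonnegative powers of $x_{\overline\tau}$, so its support lies in the halfspaces cut out by the $\overline\tau$-directions; I must check that the support cone $C\subseteq\ker_\ZZ(A)$ is contained in $\cC^*$ for a strongly convex open cone $\cC\subseteq\cC_w$ with $w\in\cC$. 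This is where the inequality $wB_\sigma>0$ (via \eqref{w-Bsigma} and \eqref{eqn:cone-secondary-fan}) combined with the nonnegativity of the $\overline\tau$-exponents does the work: the displacement vectors $B_\sigma\mathbf{m}$ with $\mathbf{k}+\mathbf{m}\in\NN^{\overline\sigma}$ all have nonnegative $w$-weight, exactly as in the support computation in the proof of Proposition \ref{prop: generic-basic-Nilsson}. Condition iii) follows from Lemma \ref{lemma: deg-p-bound}, which bounds the log-degrees uniformly, and condition iv) is arranged by normalizing the support so $0\in C$.

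\emph{The main obstacle} is the non-generic $\beta$: when $\beta$ is special the $\Gamma$-series $\phi_{v_\sigma^{\mathbf{k}}}$ may fail to have minimal negative support, may acquire logarithmic terms, and distinct $(\sigma,\mathbf{k})$ may give overlapping supports, so the clean picture of Theorem \ref{Fer-10-theorem} breaks down. I expect to handle this by a perturbation/specialization argument: perturb $\beta$ to a generic $\beta+\epsilon\beta'$, obtain the basic Nilsson solutions there, and take a confluence/limit as $\epsilon\to 0$ in the spirit of the proof of \cite[Theorem 3.5.1]{SST} used repeatedly above (e.g.\ in Lemma \ref{lemma: Nilsson-lower-bound} and Corollary \ref{cor:DMM-improvement}), producing possibly logarithmic basic Nilsson solutions whose initial exponents still lie on the triangulation $T(\tau)\subseteq T_w$. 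The degree bound of Lemma \ref{lemma: deg-p-bound}, being independent of $\beta$, guarantees that the limiting series genuinely satisfy condition iii).

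\emph{Finally}, having shown that a spanning set of the Gevrey space sits inside $\cN_w(H_A(\beta))$, linearity gives that the whole space $Hom_D(M_A(\beta),\cO_{\widehat{X|Y_\tau},p})$ embeds into $\cN_w(H_A(\beta))$, which is the assertion. I would close by noting that the inclusion is as stated without yet claiming equality or computing dimensions; those refinements are deferred to Theorem \ref{thm: Gevrey-equal-Nilsson} under the extra hypothesis $\pos(A)=\pos(A_\tau)$.
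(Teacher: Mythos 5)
There is a genuine gap, and it is precisely the issue you flag as ``the main obstacle'' but do not actually overcome. Your strategy is to produce a \emph{spanning set} of the Gevrey solution space consisting of (limits of) $\Gamma$--series attached to $T(\tau)$, obtained by perturbing $\beta$ to $\beta+\epsilon\beta'$ and taking a confluence limit in the spirit of \cite[Theorem 3.5.1]{SST}. That procedure only ever yields $\sum_{\sigma\in T(\tau)}\vol(\sigma)=\vol(\tau)$ linearly independent solutions --- it is a lower-bound construction, which is exactly why Lemma \ref{lemma: Nilsson-lower-bound} and Corollary \ref{cor:DMM-improvement} state inequalities rather than equalities. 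For rank-jumping $\beta$ the space $Hom_D(M_A(\beta),\cO_{\widehat{X|Y_\tau},p})$ can have dimension strictly larger than $\vol(\tau)$ (Theorem \ref{thm: lower-bound-Gevrey} is only a lower bound, and the statement to be proved makes no genericity assumption), so the limiting series cannot span the Gevrey space and linearity does not let you conclude that \emph{every} Gevrey solution lies in $\cN_w(H_A(\beta))$. No amount of care with the limit fixes this; the approach is structurally unable to reach the extra solutions that appear at special parameters.

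The paper's proof sidesteps this by never trying to exhibit a basis. It starts from an \emph{arbitrary} Gevrey solution $f=\sum_\alpha f_\alpha(x_\tau)x_{\overline\tau}^{\alpha}$ and uses the coefficient-wise structure: by \cite[Lemma 6.11]{Fer10} each $f_\alpha$ is a holomorphic solution of $M_{A_\tau}(\beta-A_{\overline\tau}\alpha)$, and since $A_\tau$ is pointed and $w(\tau)$ is a perturbation of $(1,\dots,1)$, Theorem \ref{thm-DDM:convergent-generic} --- which holds for \emph{all} parameters --- identifies $f_\alpha$ with an element of $\cN_{w(\tau)}(H_{A_\tau}(\beta-A_{\overline\tau}\alpha))$. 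This is the key input your proposal is missing. The remaining work (splitting $f$ into subseries supported on single cosets of $\ker_\ZZ(A)$, and establishing the existence of $\ini_w(f)$ via Lemma \ref{lemma: exponent-positive-coordinates} and the decomposition $\widetilde v=\beta^\sigma+B_\sigma\widetilde v_{\overline\sigma}$ to get the uniform lower bound $\min_\sigma\Re(\langle w,\beta^\sigma\rangle)$ on $w$-weights) is also more delicate than your appeal to the support computation of Proposition \ref{prop: generic-basic-Nilsson}, which applies to the $\Gamma$--series $\phi_{v_\sigma^{\mathbf k}}$ but not to an arbitrary Gevrey solution. Your use of Lemma \ref{lemma: deg-p-bound} for condition iii) does match the paper.
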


\begin{proof}
Let $f=\sum_{\alpha\in \NN^{\overline{\tau}}}
f_{\alpha}(x_{\tau} ) x_{\overline{\tau}}^{\alpha} \in\cO_{\widehat{X|Y},p}$ be any Gevrey series solution of $M_A(\beta)$. By \cite[Lemma 6.11]{Fer10} we have that $f_{\alpha}(x_{\tau} )$ is a holomorphic solution of $M_{A_\tau}(\beta-A_{\overline{\tau}}\alpha)$ at $p$. Thus, by Theorem \ref{thm-DDM:convergent-generic}, each $f_{\alpha}(x_{\tau})$ can be written as an element of $\cN_{w(\tau)}(H_{A_\tau}(\beta-A_{\overline{\tau}}\alpha))$, i.e. a finite linear combination of series of the form \eqref{eqn: basic-Nilsson} in the variables $x_\tau$ convergent at $p$. In particular, $f$ can be rewritten as a sum of terms $x^{\gamma} q_{\gamma}(\log x_\tau)$ where $\gamma \in \CC^n$ and $q_{\gamma}\in\CC[x_\tau]$ are polynomials with degree $\deg (q_\gamma)\leq (n+1) 2^{2(d+1)} \vol(A_\tau)$, see Lemma \ref{lemma: deg-p-bound}. Moreover, notice that the result of applying a monomial $x^\lambda \partial^\mu$ in the Weyl Algebra $D$ to $x^{\gamma} \log (x)^\nu$ is of the form $x^{\gamma-\mu+\lambda} g(\log (x))$ for some polynomial $g$. 
This implies that any subseries of $f$ of the form $\sum_{\gamma \in (v+\ZZ^n)} x^{\gamma} q_{\gamma}(\log x_\tau)$, for some $v\in \CC^n$, is still annihilated by $H_A(\beta)$ and hence defines a Gevrey solution of $M_A(\beta)$. Since the dimension of the space of Gevrey series solutions is finite, $f$ is a finite sum of such subseries, say $F_1,\ldots, F_r$. Since $F_k$ is annihilated by the Euler operators of $A$ we have that $A\gamma=\beta$ for all $\gamma\in \CC^n$ such that $q_\gamma \neq 0$. Thus, we can write each $F_k$ in the form $$F_k = x^v \sum_{u \in \ker_\ZZ (A)} x^{u} p_{u}(\log (x_{\tau}))$$ where $v\in \CC^n$ and $p_u =q_{v+u}$. It is enough to prove that a Gevrey solution $F_k$ of $M_A(\beta)$ is a formal Nilsson solution of $H_A(\beta)$ in the direction of $w$ and we can assume for simplicity that the original $f=\sum_{\alpha\in \NN^{\overline{\tau}}}
f_{\alpha}(x_{\tau} ) x_{\overline{\tau}}^{\alpha} \in\cO_{\widehat{X|Y},p}$ can also be written in this form. Notice that we have shown that $f$ can be written as in \eqref{eqn: basic-Nilsson} satisfying conditions i) and iii) in Definition \ref{def: formal-Nilsson}. We need to prove that the support of $f$ is of the form $v'+C$ for some $v'\in v+\ker_{\ZZ}(A)$ and $C$ satisfying conditions ii) and iv) in Definition \ref{def: formal-Nilsson}.

Notice that for all $\alpha\in\NN^{\overline{\tau}}$, we have
$$f_{\alpha}(x_{\tau} )= \sum_{u}x^{(v+u)_{\tau}} p_u(\log(x_\tau))$$ where the sum is over the vectors $u \in \ker_{\ZZ}(A)$ such that $(v+u)_{\overline{\tau}}=\alpha$ (in particular, $u_\tau$ varies in a translate of $\ker_{\ZZ}(A_\tau)$).

Let us see that $f$ has an initial form with respect to $w$. It is enough to find a lower bound for the $w$-weights of all terms $x^{v+u}p_u(\log(x_\tau))$ of $f$ with $p_u\neq 0$.

By \cite[Theorem 2.5.5]{SST}, the initial form of $f_\alpha \in \cN_{w(\tau)}(H_{A_\tau}(\beta-A_{\overline{\tau}}\alpha))$ with respect to $w(\tau)$ is a solution of $\ini_{(-w(\tau),w(\tau))}(H_{A_\tau}(\beta-A_{\overline{\tau}}\alpha))$. Since $w(\tau)$ is generic, $\ini_{w(\tau)}(f_\alpha)=x_\tau^{\widetilde{v}_\tau} p_{u'}(\log(x_\tau))$ for some $\widetilde{v}_\tau \in \CC^\tau$ and $u'\in \ker_{\ZZ} (A)$. By \cite[Theorems 2.3.3(2), 2.3.9, 2.3.11]{SST}, $\widetilde{v}_\tau$ is an exponent of $H_{A_\tau}(\beta-A_{\overline{\tau}}\alpha)$ with respect to $w(\tau)$. Thus, by Lemma \ref{lemma: exponent-positive-coordinates} there is a maximal simplex $\sigma\in T_{w(\tau)}$ such that $\widetilde{v}_j\in \NN$ for all $j\in \tau\setminus \sigma$.

Let us denote by $\widetilde{v}\in\CC^n$ the vector with $\overline{\tau}$-coordinates $\widetilde{v}_{\overline{\tau}}=\alpha$ and $\tau$-coordinates equal to $\widetilde{v}_\tau$. Notice that $\widetilde{v}_{\overline{\sigma}}\in \NN^{\overline{\sigma}}$ and since $A \widetilde{v} =\beta$ we have that $\widetilde{v}_{\sigma}=
A_{\sigma}^{-1}(\beta-A_{\overline{\sigma}}\widetilde{v}_{\overline{\sigma}})$. We can write 
$$\widetilde{v}=\beta^\sigma + B_\sigma \widetilde{v}_{\overline{\sigma}}$$ where $B_\sigma$ was defined in Subsection   \ref{subsection Notations} and $\beta^{\sigma}\in \CC^n$ denotes the vector whose $\sigma$-coordinates agree with  $A_{\sigma}^{-1}\beta$ and whose other coordinates are zero.

Moreover, by Lemma \ref{lemma-triangulations}, $\sigma\in T_w$. Thus, by \eqref{w-Bsigma} and the fact that $\widetilde{v}_{\overline{\sigma}}\in\NN^{\overline{\sigma}}$, we have that 
$$\Re(\langle w , \widetilde{v}\rangle)=\Re(\langle w , \beta^\sigma \rangle)+ \langle w , B_\sigma \widetilde{v}_{\overline{\sigma}}\rangle=\Re(\langle w , \beta^\sigma \rangle)+ \langle w B_\sigma, \widetilde{v}_{\overline{\sigma}}\rangle\geq \Re(\langle w , \beta^{\sigma}\rangle)$$  where $\Re(\langle w , \widetilde{v}\rangle)$ is the $w$-weight of $\ini_{w}(f_\alpha (x_\tau) x_{\overline{\tau}}^\alpha)=\ini_{w(\tau)}(f_\alpha (x_\tau)) x_{\overline{\tau}}^\alpha$.

Then the minimum of the finite set 
$$\{\Re(\langle w , \beta^{\sigma}\rangle)|\; \sigma\in T_{w(\tau)}\}$$ is a lower bound for the $w$-weights of all the terms of $f$. Thus, $f$ has an initial form with respect to $w$ and $\ini_w(f)$ must be a term $x^{v+u'}p_{u'}(\log(x_\tau))$ for some $u'\in \ker_{\ZZ}(A)$ because $w$ is generic. We may assume for simplicity that $\ini_w(f)=x^v p_0(\log(x_\tau))$ since $v+\ker_{\ZZ}(A)=v+u'+\ker_{\ZZ}(A)$. This implies that $x^v p_0(\log(x_\tau))$ is a solution of $\ini_{(-w,w)}(H_A(\beta))$ by the same argument as in the proof of \cite[Theorem 2.5.5]{SST}, hence $v$ is an exponent of $H_A(\beta)$ with respect to $w$. 
The set $C:=\{u\in\ker_\ZZ (A)|\; p_u\neq 0\}$ satisfies condition iv) in Definition \ref{def: formal-Nilsson}.

Let $w'\in \cC_w$ be generic and such that $w_\tau '$ is a perturbation of $w(\tau)$. Then the previous argument works as well for $w'$ instead of $w$ and we have that $f$ has also an initial form with respect to $w'$ of the form $\ini_{w'}(f)=x^{v+u'} p_{u'} (\log(x_\tau))$ and that $v+u'$ is an exponent of $H_A(\beta)$ with respect to $w'$. Since the set of exponents of $H_A(\beta)$ with respect to $w'$ is finite and constant for all $w'\in \mathcal{C}_w$, we can find an open cone $\cC'\subseteq \mathcal{C}_w$ such that $w\in \cC'$ and $\ini_{w'}(\phi)=x^v p_0(\log(x_\tau)$ for all $w'\in \cC'$. Hence, we have $\langle w',u\rangle> 0$ for all $w'\in \cC'$ and all $u\in C\setminus\{0\}$. Thus, $f$ satisfies condition ii) in Definition \ref{def: formal-Nilsson}.
\end{proof}

The following result provides a partial converse to Theorem \ref{thm: Gevrey-subset-Nilsson}.

\begin{theorem}\label{thm: Gevrey-equal-Nilsson}
If $A$ is pointed, $\pos (A_\tau)=\pos (A)$ and $p\in \mathcal{U}_{w(\tau)}\subseteq Y_\tau$, we have $$\dim_\CC (Hom_D (M_A(\beta),\cO_{\widehat{X|Y_\tau},p}))=\dim_\CC (\cN_w (H_A(\beta)))$$ for all $\beta\in \CC^d$. More precisely, $\cN_w(H_A(\beta))$ is the space of Gevrey series solutions of $M_A(\beta)$ along $Y_\tau$ at any point $p\in \mathcal{U}_{w(\tau)}$. Moreover, for $\beta\in \CC^d\setminus \varepsilon (A)$, the dimension of this space is $\vol(\tau)$.
\end{theorem}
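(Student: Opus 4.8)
The plan is to show that the two spaces $Hom_D(M_A(\beta),\cO_{\widehat{X|Y_\tau},p})$ and $\cN_w(H_A(\beta))$ coincide for every $\beta\in\CC^d$, and then to compute the dimension in the non-rank-jumping case. One inclusion is free: since $A_\tau$ is pointed of rank $d$, Theorem \ref{thm: Gevrey-subset-Nilsson} already writes every Gevrey solution along $Y_\tau$ at $p\in\mathcal{U}_{w(\tau)}$ as an element of $\cN_w(H_A(\beta))$, so $Hom_D(M_A(\beta),\cO_{\widehat{X|Y_\tau},p})\subseteq\cN_w(H_A(\beta))$. The role of the extra hypothesis $\pos(A_\tau)=\pos(A)$ is that, by Lemma \ref{lemma-triangulations}, it upgrades $T(\tau)\subseteq T_w$ to the equality $T_w=T(\tau)$; hence every facet $\sigma\in T_w$ is a simplex contained in $\tau$, equivalently $\overline{\tau}\subseteq\overline{\sigma}$ for all $\sigma\in T_w$. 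For generic $\beta$ this already closes the argument: the Gevrey basis $\{\phi_{v_\sigma^{\mathbf{k}}}:\sigma\in T(\tau),\ \mathbf{k}\in\Omega_\sigma\}$ of Theorem \ref{Fer-10-theorem} and the Nilsson basis $\mathcal{B}_w(\beta)=\{\phi_{v_\sigma^{\mathbf{k}}}:\sigma\in T_w,\ \mathbf{k}\in\Omega_\sigma\}$ of Proposition \ref{prop: generic-basic-Nilsson} are literally the same set once $T_w=T(\tau)$, so the two spaces agree. The substance is therefore the reverse inclusion for arbitrary $\beta$.

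For that I would take a basic Nilsson solution $\phi=x^v\sum_{u\in C}x^up_u(\log(x))$ in the direction of $w$ and prove $\phi\in\cO_{\widehat{X|Y_\tau},p}$. Since $v$ is an exponent of $H_A(\beta)$ with respect to $w$, Lemma \ref{lemma: exponent-positive-coordinates} produces $\sigma\in T_w$ with $v_j\in\NN$ for all $j\notin\sigma$, and $\overline{\tau}\subseteq\overline{\sigma}$ forces $v_{\overline{\tau}}\in\NN^{\overline{\tau}}$. The key claim is that the whole support inherits this, i.e. $(v+u)_j\in\NN$ for every $j\in\overline{\tau}$ and $u\in C$; this is the logarithmic analogue of the minimal negative support condition ($N_v$ in \eqref{eqn:SST-series}) and follows from the annihilation of $\phi$ by the toric operators of $H_A(\beta)$, see \cite{SST}. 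Granting it, I can regroup
\[
\phi=\sum_{\alpha\in\NN^{\overline{\tau}}}f_\alpha(x_\tau)\,x_{\overline{\tau}}^{\alpha},\qquad
f_\alpha(x_\tau)=\sum_{\substack{u\in C\\ (v+u)_{\overline{\tau}}=\alpha}}x_\tau^{(v+u)_\tau}p_u(\log(x_\tau)),
\]
provided no $\log x_k$ with $k\in\overline{\tau}$ survives. Each coefficient $f_\alpha$ is then a formal solution of $H_{A_\tau}(\beta-A_{\overline{\tau}}\alpha)$ in the direction of $w(\tau)$; because $A_\tau$ is pointed (as $A$ is and $\pos(A_\tau)=\pos(A)$) and $w(\tau)$ is a perturbation of $(1,\dots,1)\in\RR^{\tau}$, Theorem \ref{thm-DDM:convergent-generic} applied to $A_\tau$ shows $f_\alpha$ converges at $p\in\mathcal{U}_{w(\tau)}$, with $\log$-degrees bounded uniformly in $\beta$ by Lemma \ref{lemma: deg-p-bound}. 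Holonomicity of $M_A(\beta)$ then makes this formal series Gevrey along $Y_\tau$, so $\phi\in Hom_D(M_A(\beta),\cO_{\widehat{X|Y_\tau},p})$, completing the reverse inclusion.

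The two inclusions give the equality of spaces, hence of dimensions, for all $\beta\in\CC^d$, which is the main assertion. For the final numerical statement, assume $\beta\notin\varepsilon(A)$: Corollary \ref{cor: equalities-still-hold} yields $\dim_\CC(\cN_w(H_A(\beta)))=\sum_{\sigma\in T_w}\vol(\sigma)$ through \eqref{equality-inw-Nilsson}, and since $T_w=T(\tau)$ is a triangulation of $A_\tau$ refining $\Gamma_{A_\tau}$ its simplicial volumes sum to $\vol(A_\tau)=\vol(\tau)$. This matches the lower bound $\dim_\CC(Hom_D(M_A(\beta),\cO_{\widehat{X|Y_\tau},p}))\geq\vol(\tau)$ of Theorem \ref{thm: lower-bound-Gevrey} at a generic $p$, as it must.

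The hard part is the reverse inclusion for non-generic, possibly rank-jumping, $\beta$, where $\cN_w$ can be strictly larger than $\vol(\tau)$ and no dimension sandwich is available. Two points there resist the purely cone-theoretic description $C\subseteq\cC^*$: the transverse positivity $(v+u)_{\overline{\tau}}\in\NN^{\overline{\tau}}$, which must be extracted from the differential equations rather than from $\cC^*$ (which is too large, since the $A$--hypergeometric fan only refines the secondary fan), and the absence of logarithms in the transverse variables $\log x_{\overline{\tau}}$. I expect to control both by descending to the coefficient systems $H_{A_\tau}(\beta-A_{\overline{\tau}}\alpha)$ of the pointed configuration $A_\tau$ and invoking Theorem \ref{thm-DDM:convergent-generic} together with the uniform bound of Lemma \ref{lemma: deg-p-bound}.
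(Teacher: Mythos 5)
Your overall architecture matches the paper's: one inclusion from Theorem \ref{thm: Gevrey-subset-Nilsson}, the identity $T_w=T(\tau)$ coming from $\pos(A_\tau)=\pos(A)$ via Lemma \ref{lemma-triangulations}, the decomposition of a basic Nilsson solution into coefficients $f_\alpha$ annihilated by $H_{A_\tau}(\beta-A_{\overline{\tau}}\alpha)$, convergence of those coefficients via Theorem \ref{thm-DDM:convergent-generic}, and the dimension count via Corollary \ref{cor: equalities-still-hold} and Theorem \ref{thm: lower-bound-Gevrey}. You also correctly locate the crux: proving $(v+u)_{\overline{\tau}}\in\NN^{\overline{\tau}}$ for all $u\in C$, and ruling out $\log x_k$ for $k\in\overline{\tau}$. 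But both points are left as assertions, and your stated plan for closing them --- ``descending to the coefficient systems $H_{A_\tau}(\beta-A_{\overline{\tau}}\alpha)$ and invoking Theorem \ref{thm-DDM:convergent-generic}'' --- cannot work for the first one: when $\alpha\notin\NN^{\overline{\tau}}$ the coefficient $f_\alpha$ is still a perfectly good formal solution of $H_{A_\tau}(\beta-A_{\overline{\tau}}\alpha)$, so nothing visible at the level of $A_\tau$ forces it to vanish. The vanishing must come from operators of $H_A(\beta)$ that mix the $\tau$ and $\overline{\tau}$ variables.

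The paper's actual argument for transverse positivity is a contradiction via specific toric binomials. Given $j\in\overline{\tau}$ and $u\in C$ with $v_j+u_j<0$ and $\langle w,u\rangle$ minimal among such, the hypothesis $\pos(A)=\pos(A_\tau)=\cup_{\sigma\in T(\tau)}\pos(\sigma)$ produces a relation $m_ja_j=\sum_{i\in\sigma}m_ia_i$ with $m_j\geq 1$, hence $u(m):=-m_je_j+\sum_{i\in\sigma}m_ie_i\in\ker_{\ZZ}(A)$ and $P=\partial_\sigma^{m_\sigma}-\partial_j^{m_j}\in H_A(\beta)$. By Lemma \ref{lemma: diff-log-term} the factor $[v_j+u_j]_{m_j}$ is nonzero precisely because $v_j+u_j<0$, so $\partial_j^{m_j}$ does not kill the term indexed by $u$; since $P(\phi)=0$ this term must be cancelled, which forces $u+u(m)\in C$ with $j$-th coordinate still negative and with strictly smaller $w$-weight, because $\ini_w(P)=-\partial_j^{m_j}$ (as $\partial_\sigma^{m_\sigma}\notin\ini_w(I_A)$ for $\sigma\in T_w$), i.e. $\langle w,u(m)\rangle<0$. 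This contradicts minimality. Here $\pos(A)=\pos(A_\tau)$ is used in an essential, quantitative way, and the example closing the paper shows the conclusion genuinely fails without it. Separately, the absence of $\log x_{\overline{\tau}}$ is not automatic: the paper derives it from \cite[Proposition 5.4]{Saito-log-free}, which confines the variables of the $p_u$ to $\log x_j$ for $j$ a vertex of $T_w$, and these all lie in $\tau$ once $T_w=T(\tau)$. Finally, to apply Theorem \ref{thm-DDM:convergent-generic} you must first certify that $f_\alpha$ actually lies in $\cN_{w(\tau)}(H_{A_\tau}(\beta-A_{\overline{\tau}}\alpha))$, which requires checking that its initial form is constant on a neighborhood of $w(\tau)$ (the paper uses \cite[Remark 2.7]{DMM12}); this smaller step is also skipped in your write-up.
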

\begin{proof}
By Theorem \ref{thm: Gevrey-subset-Nilsson}, it is enough to show that any basic Nilsson series $\phi$ in the direction of $w$ as in \eqref{eqn: basic-Nilsson} is a Gevrey series along $Y_\tau$ at any point $p\in \mathcal{U}_{w(\tau)}\subseteq Y$.

Since $\pos (A_\tau)=\pos (A)$ we have that $T(\tau)=T_w$ by the construction of $w$, see the proof of Lemma \ref{lemma-triangulations} and the subsequent paragraph.

By \cite[Lemma 2.10]{DMM12} we have that $v$ is an exponent of $H_A(\beta)$ with respect to $w$. Thus, there exists $\sigma \in T_w=T(\tau)$ such that $v_j\in \NN$ for all $j\notin\sigma$, see Lemma \ref{lemma: exponent-positive-coordinates}. In particular, we have $v_{\overline{\tau}}\in\NN^{\overline{\tau}}$. Moreover, by \cite[Proposition 5.4]{Saito-log-free} (whose proof is also valid when $I_A$ is not necessarily homogeneous) we have that $p_u (y) \in  \CC[ y_j :\;  j \in \operatorname{vert}(T_w )]$ for all $u\in C$, where $\operatorname{vert}(T_w )$ is the set of vertices of $T_w$. In particular, $p_u \in \CC[y_j :\; j\in \tau]$. Thus, we can define  $p_u(y_\tau):=p_u(y)$ and write  
$$\phi=\sum_{u\in C} x^{v+u} p_u(\log(x_\tau))=\sum_{\alpha \in \ZZ^n} f_{\alpha}(x_\tau) x_{\overline{\tau}}^{\alpha}$$ where 
$$f_{\alpha}(x_\tau)=\sum_{u\in C, \; (v+u)_{\overline{\tau}}=\alpha}x^{(v+u)_{\tau}} p_u(\log(x_\tau))$$ is annihilated by $H_{A_\tau}(\beta-A_{\overline{\tau}}\alpha)$, see the proof of \cite[Lemma 6.11]{Fer10}. 

We need to prove that $f_\alpha=0$ if $\alpha\notin \NN^n$. That is, we need to show that $v_j+u_j\geq 0$ for all $j\notin \tau$ and $u\in C$, where $v+C=\operatorname{supp}(\phi)$. Since $\ini_w(\phi)=x^v p_0(\log(x_\tau))$, we know that $\langle w,u\rangle > 0$ for all $u\in C\setminus\{0\}$. Assume to the contrary that there exist $u\in C$ and $j\in\overline{\tau}$ such that $v_j+u_j<0$ and choose such a vector $u$ so that $\langle w,u\rangle$ is minimal. Since $j\in\overline{\tau}$ and $\pos(A)=\pos(A_{\tau})=\cup_{\sigma \in T(\tau)} \pos(\sigma)$, there exist $m_\sigma \in \NN^\sigma$ and $m_j\in \NN$ with $m_j\geq 1$ such that $m_j a_j=\sum_{i\in\sigma} m_i a_i$. Then $u(m):=-m_j e_j+\sum_{i\in\sigma} m_i e_i \in \ker_{\ZZ}(A)$, where $e_\ell$ denotes the $\ell$-th vector of the standard basis of $\RR^n$. 
Notice that $P=\partial^{u(m)_{+}}-\partial^{u(m)_{-}}=\partial_\sigma^{m_\sigma}-\partial_j^{m_j}\in H_A(\beta)$, hence $P(\phi)=0$. 

By Lemma \ref{lemma: diff-log-term} and using that  $\partial_j [p_u]=0$ for all $j\notin \tau$, we have that
$$\partial_j^{m_j} [x^{v+u} p_u(\log(x_\tau))]
=x^{v+u-m_j e_j}\lambda_{m_j} p_u(\log(x_\tau))\neq 0$$ where $\lambda_{m_j}=[v_j+u_j]_{m_j}\neq 0$ because $v_j +u_j<0$ by assumption.
Since $P(\phi)=0$ and $v+u-m_j e_j =v+u+u(m)_{-}$ belongs to the support of the series $\partial^{u(m)_{-}}(\phi)=\partial^{u(m)_{+}}(\phi)$, we have that $v+u+u(m)$ must be in the support of $\phi$, but then $u+u(m)\in C$ and $v_j + u_j +u(m)_j<0$. On the other hand, $\ini_w (P)=-\partial_j^{m_j}$ because $\partial_\sigma^{m_\sigma} \notin \ini_w(I_A)$ for $\sigma \in T(\tau)=T_w$, see \cite[Corollary 8.4]{Sturm}. Thus, $\langle w, u(m)\rangle<0$ and 
we obtain that $\langle w, u+u(m) \rangle <\langle w,u\rangle$ which is in contradiction with our assumption. 

Finally, let us see that each $f_\alpha$ is convergent at $p\in \cU_{w(\tau)}$. Since $\phi$ is basic, there is an open and strongly convex cone $\mathcal{C}\subseteq \cC_w$ such that $w\in \cC$ and $\langle w ',u\rangle >0$ for all $w'\in \mathcal{C}$ and all $u\in C\setminus \{0\}$. In particular, $\ini_{w'}(\phi)=\ini_w(\phi)$ for all $w'\in \mathcal{C}$. This implies that for fixed $\alpha$ and all $w'\in \cC$ with $w_{\overline{\tau}}'=w_{\overline{\tau}}$ there exists the initial form $\ini_{w_\tau'}(f_\alpha)$. Notice that the set of all $w_{\tau}'$ for $w'$ as above is a neighborhood of $w_\tau \in \RR^\tau$ and we can find a smaller neighborhood $U$ of $w_\tau$ such that $\ini_{w_\tau'}(f_\alpha)=\ini_{w(\tau)}(f_\alpha)$ for all $w_\tau'\in U$. Then, by \cite[Remark 2.7]{DMM12}, $f_\alpha\in \cN_{w(\tau)}(H_{A_\tau}(\beta-A_{\overline{\tau}}\alpha))$ and by Theorem \ref{thm-DDM:convergent-generic}, $f_\alpha$ is convergent at any point $p\in \cU_{w(\tau)}$.

Thus, $\phi$ is a formal solution of $M_A(\beta)$ along $Y_\tau$, which implies it is Gevrey of some order because $M_A(\beta)$ is holonomic.

For $\beta\in \CC^d\setminus \varepsilon (A)$, $\dim(\cN_w(H_A(\beta)))=\vol(\tau)$ by Corollary \ref{cor: equalities-still-hold}, where $\cup_{\sigma\in T_w} \Delta_\sigma=\Delta_\tau$ in our case, and Theorem \ref{thm: lower-bound-Gevrey}. 
\end{proof}

\begin{remark}
The additional condition $\pos(A)=\pos(A_\tau)$ in Theorem \ref{thm: Gevrey-equal-Nilsson} is necessary, even if $\beta\in \CC^d$ is generic, as shown by the following example.
\end{remark}

\begin{example}
Let us consider the system $H_A(\beta)$ for  
$$A=\left(\begin{array}{ccc}
   1 & 0 & 3\\
   0 & 1 & -1
  \end{array}\right),$$ and a generic parameter $\beta\in\CC^2$. 
  For $\tau=\{1,2\}$, we have $Y_\tau=\{x_3=0\}$. Let $w=
  (0,0,1)+\epsilon (w(\tau),0)$ where $\epsilon>0$ is small enough and $w(\tau)\in \RR^2$ is a perturbation of $(1,1)$. We have that $T_w =\{\tau ,\sigma\}$ for $\sigma=\{1,3\}$, see Figure 1. Moreover, $T(\tau)=\{\tau\}\subseteq T_w$.
  
  Notice that both simplices $\tau, \sigma$ have normalized volume one. According to Proposition \ref{prop: generic-basic-Nilsson}, $\{\phi_1:=\phi_{v_\tau^0},\phi_2:=\phi_{v_\sigma^0}\}$ is a basis of $\cN_w (H_A(\beta))$, where $v_\tau^0=(\beta_1 ,\beta_2 ,0)$ and $v_\sigma^0=(\beta_1+ 3 \beta_2, 0, -\beta_2)$. Moreover $\ker_{\ZZ}(A)=\ZZ (-3,1,1)$. Thus, $\operatorname{supp}(\phi_1)=\{(\beta_1-3m,\beta_2+m,m)|\; m\in \NN\}$ and $\operatorname{supp}(\phi_2)=\{ (\beta_1 +3 \beta_2-3m,m,-\beta_2 + m)|\; m\in \NN\}.$ It follows from Theorem \ref{Fer-10-theorem} that $\phi_1$ generates the space of Gevrey solutions of $H_A(\beta)$ along $Y_\tau =\{x_3=0\}$ at any point $p\in \{x_3=0; x_1 x_2\neq 0\}$.
  
  
In particular, the space $\cN_w (H_A (\beta))$ is not contained in the space of Gevrey solutions of $M_A(\beta)$ along $Y_\tau$, although all the assumptions in Theorem \ref{thm: Gevrey-equal-Nilsson} except the condition $\pos(A)=\pos(A_\tau)$ are satisfied.
\end{example}

\setlength{\unitlength}{10mm}
$$\begin{picture}(-7,5)(10,2)
\put(5,4){\vector(0,1){3}}
\put(4,5){\vector(1,0){5}}
\put(6,5){\line(-1,1){1}}
\put(6,5){\line(2,-1){2}}
\put(5,5){\line(3,-1){3}}
\put(6,5){\makebox(0,0){$\bullet$}}
\put(5,6){\makebox(0,0){$\bullet$}}
\put(8,4){\makebox(0,0){$\bullet$}}
\put(6,6){\makebox(-0.7,-0.7){$\tau$}}
\put(7,5){\makebox(0.1,-0.7){$\sigma$}}
\put(6,5){\makebox(0.6,0.4){$a_1$}}
\put(5,6){\makebox(0.7,0.2){$a_2$}}
\put(8,4){\makebox(0.8,-0.2){$a_3$}} 
\put(6,3){\makebox(0,0){Figure 1}}
\end{picture}$$

\raggedbottom
\def\cprime{$'$} \def\cprime{$'$}
\providecommand{\MR}{\relax\ifhmode\unskip\space\fi MR }
\providecommand{\MRhref}[2]{%
  \href{http://www.ams.org/mathscinet-getitem?mr=#1}{#2}
}
\providecommand{\href}[2]{#2}

\end{document}